\definecolor{blueumi}{rgb}{0.00,0.48,0.62}
\newtheorem{definition}{Definition}[section]
\newtheorem{theo}{Theorem}[section]
\newtheorem{prop}[theo]{Proposition}
\newtheorem{cor}{Corollary}
\begin{document}
\pagestyle{plain}

\cfoot[C]{\thepage}
\vspace*{-1cm}

\title{On Topological Complexity of $(r,\rho(R))$-mild spaces}
\author[1]{Smail BENZAKI}
\author[2]{Youssef RAMI}

{\let\thefootnote\relax\footnote{{\it Address}: Moulay Ismaïl University, Department of Mathematics  B. P. 11 201 Zitoune,  Meknès, Morocco}\let\thefootnote\relax\footnote{{\it Emails}: smail.benzaki@edu.umi.ac.ma  and y.rami@umi.ac.ma}}

 \keywords{Rational topological complexity, Commutative models, $r$-mild algebra, $r$-mild CW-complex}
 \subjclass{Primary 55P62; Secondary 55M30 }

\renewcommand{\abstractname}{Abstract}
\begin{abstract}
In this paper, we first prove the existence of relative free models of morphisms (resp. relative commutative models) in the category of $DGA(R)$ (resp. $CDGA(R)$), where $R$ is a principal ideal domain containing $\frac{1}{2}$. Next, we restrict to the category of 
 $(r,\rho(R))$-H-mild algebras and we introduce, following Carrasquel's characterization, $secat(-, R)$, the sectional category for surjective morphisms. We then apply this to the $n$-fold product of the commutative model of an $(r,\rho(R))$-mild CW-complex of finite type to introduce $TC_n(X,R)$, $mTC_n(X,R)$ and $HTC_n(X,R)$ which extend well known rational topological complexities. We do the same for $\operatorname{sc(-, \mathbb{Q})}$ to introduce analogous algebraic $\operatorname{sc(-,R)}$ in terms of their commutative models over  $R$ and prove that it is an upper bound for $secat(-, R)$. This also yields, for any   $(r,\rho(R))$-mild CW-complex,  the algebraic $tc_n(X,R)$, $mtc_n(X,R)$ and $Htc_n(X,R)$ whose relation to the homology nilpotency is investigated. In the last section,  in the same spirit, we introduce in  $DGA(R)$,  $secat(-, R)$, $\operatorname{sc(-,R)}$ and their topological correspondents. We then  prove, in particular, that $ATC_n(X,R)\leq TC_n(X,R)$ and $Atc_n(X,R)\leq tc_n(X,R)$.
\end{abstract}

\maketitle

\section{Introduction}

Rational topological complexity provides a foundation for comprehending the complexities of continuous motion planning in high-dimensional environments. Based on the notion of topological complexity, which evaluates the minimal difficulty of a motion planning algorithm on a topological space, rational topological complexity broadens this concept by including rational methods.

The {\it (higher) topological complexity} $TC_n(X)$ of a topological space $X$ is the sectional category $secat(\Delta_n)$ of the diagonal map $\Delta_n : X\rightarrow X^n$ (\cite{Fa} for $n=2$ and \cite{Ru} for $n\geq 2$). Recall that 
 sectional category $secat(f)$, called also the {\it Schwartz genus}, of a map $f : X\rightarrow Y$ is defined in \cite{Sch}, as the least $m$ such that there exist $m+1$ local homotopy sections for $f$ whose domains form an open cover of $Y$.

In this paper, we extend the scope of rational topological complexity by considering a principal ideal domain, thereby generalizing its applicability to a wider array of mathematical structures.
 This extension not only enhances our theoretical comprehension of topological complexity but also widens its potential applicability to address motion planning challenges in scenarios where rational constraints may not fully capture underlying geometric or algebraic intricacies.

To every simply connected CW-complex of finite type $X$, it is associated a (Sullivan) minimal model $(\Lambda V, d)$, which is a commutative differential graded algebra over $\mathbb{Q}$, where $\Lambda V$ denotes the free graded commutative algebra on a graded vector space $V$ and
where $d(V)\subset \Lambda^{\geqslant 2}V$. Furthermore, every cdga morphism $\varphi : (A, d)\rightarrow (B, d)$ admits a minimal relative model
$$\begin{tikzcd}
(A,d)\arrow[r,"\varphi"]\arrow[rd,hook,"i"']  &  (B,d)  \\
  &  (A\otimes\Lambda V,d)\arrow[u,"\simeq","\psi"'],
\end{tikzcd}$$
such that $\psi\circ i=\varphi$ and $\psi$ is a quasi-isomorphism \cite{Su}.

Let $R$ be a principal ideal domain containing $\frac{1}{2}$ and denote by $\rho(R)$ the least  non invertible   prime ( or $\infty$) in $R$.

When $R$ is a sub-ring of $\mathbb{Q}$,   D. Anick defined   in \cite{A} the sub-category $CW_r^{r\rho(R)}(R)$ of finite type $r$-connected CW-complexes $X$ ($r \geq 1$) satisfying  $\dim(X)\leq r\rho(R)$, such CW-complexes are called $(r,\rho(R))$-mild. He then showed  that to any space in $CW_r^{r\rho(R)}(R)$ it is associated an appropriate differential graded  Lie algebra (henceforth "dgL") $(L=\{L_i\}_{i\geq 1}, \partial)$, where each $L_i$ is $R$-free on a finite basis,  together with a differential graded algebra equivalence
\begin{equation*}\label{Phi}
\Phi: UL\stackrel{\simeq}{\longrightarrow} C_*(\Omega(X); R)
\end{equation*}
 from the enveloping algebra on $L$ to the $R$-chains on the loop space of $X$. Moreover, $\Phi$    preserves the diagonal up to a dga homotopy
 and  it is uniquely determined up to a unique dgL homotopy class
 of dgL isomorphisms.

Further, in \cite{Ha},  S. Halperin  extended Anick's result to any $R$ \cite[p. 274]{Ha} and showed that the Cartan-Eilenberg-Chevally complex $C^*(L)$ of $L$
 is an augmented commutative differential graded algebra (cdga for short) which is equivalent to $C^*(X; R)$. Indeed these are linked by the quasi-isomorphisms (morphisms inducing  isomorphisms in cohomology):
$$C^*(L) \stackrel{\simeq}{\longleftarrow} B(C_*(\Omega (X))^{\vee}\stackrel{\simeq}{\longrightarrow} C^*(X; R)$$ where  $B(C_*(\Omega (X))^{\vee}$ is the dual of the bar-construction on the algebra $C_*(\Omega (X))$.

{\it Henceforth,  we will assume that  $H_*(\Omega X, R)$ is $R$-torsion free}.  Therefore $C^*(L)$ has a  {\it commutative decomposable model}  \cite[\S 6, Theorem 10.1]{Ha}:
 \begin{equation}\label{mmodel}
 (\Lambda W, d) \longrightarrow C^*(L).
 \end{equation}
in the sense that  $W= \{W^i\}_{i\geq 2}$, with $W^i$ $R$-free on a finite basis, and  $d: W\rightarrow \Lambda ^{\geq 2}W$.
  
We rely on the work of J. M. Thiercelin-Panais, whose only reference is \cite{Th-Pa} which unfortunately contains mistakes in the proof of Propositions $1$ and $5$, which we use as a starting point to state our results. To our knowledge, there is no other corrected version, and for the sake of completeness, we prove them in section \ref{models} (cf. Propositions $2.1$ and $2.3$ below). We also establish, Propositions $2.2$ and $2.4$ below) the Propositions $2$ and $6$ in \cite{Th-Pa} (which are not explicitly proved). 

To ensure the existence of minimal models, as in \cite[\S 7]{Ha}, {\it we will frequently consider the following condition} on a graded module $H$ for some $r\geq 1$:
\begin{equation}\label{finitegenfree}
H=\left\lbrace  H^i\right\rbrace_{i\geq 0} \text{\it is finite type, } H^0 = R,\;  H^{1\leq i\leq r} = 0 \; \text{\it and } H^{r+1}\; \text{\it is } R\text{\it -free}.
\end{equation}
 Let then $\varphi : (A,d)\rightarrow (B,d)$ be a morphism   of cdga's over a principal ideal domain $R$ containing $\frac{1}{2}$. 
 
 In Proposition $2.3$ below, assuming (for $r=1$)  that $H(B,d)$ satisfy (\ref{finitegenfree}) and, moreover,  $H^2(\varphi): H^2(A,d)\rightarrow H^2(B,d)$ is injective, we will prove that.
 Such $\varphi$ might be factored  as follows:
\begin{equation}\label{mm}
\begin{tikzcd}
(A,d) \arrow[r,"\varphi"] \arrow[rd,hook,"\iota"'] & (B,d)\\
 & (A\otimes \Lambda V,d), \arrow[u,"\simeq","\Phi"']
\end{tikzcd}
\end{equation}
with $(\Lambda V,d)$ a free finite-type cdga and  the vertical arrow $\Phi$  is a  quasi-isomorphism of cdga. 
 
We then borrow the following definition from  \cite[Definition p.427]{A}, with some modification.
\begin{definition}\label{R-milddef}
A cdga over $R$ is said to be $(r,\rho(R))$-H-mild if its homology is concentrated in the range of degrees $r+1$ through $r\rho (R)$, inclusive. 
Here $\rho(R)$ is the smallest prime number in $R$.
These form a category, that we will denote going forward,  $\text{CDGA}_r(R)$
 consisting of all $(r,\rho(R))$-H-mild objects.
\end{definition}
In Proposition $2.4$ below, we will prove  (once again for $r=1$) the lifting property for any cdga morphism $\varphi : (A,d)\rightarrow (B,d)$ satisfying the same hypothesis as in Proposition $2.3$. 

As a consequence, combining  Proposition $2.3$ and Proposition $2.4$ with 
\cite[Proposition 7.7 and Remark 7.8]{Ha} we acquire that  every $(r,\rho(R))$-{H-mild} cdga $(B,d)$ has a unique up to quasi-isomorphism minimal commutative model {\it provided that $H^{r+1}(B,d)$ is torsion free} (cf. Corollary $1$ below). 
 This generalizes  Sullivan's theory \cite{Su} for cdga's over the rationals to the category $\text{CDGA}_r(R)$.

Especially, the  commutative decomposable model $(\Lambda V,d)$ of  $C^*(L)$ given in (\ref{mmodel})
is unique up to quasi-isomorphisms and depends functorially on $X$ \cite[\S 6 and Theorem 10.1]{Ha}. 
This also extends  Sullivan's original minimality established for rational spaces  in \cite{Su}.

 Let us then fix $\varphi :(A,d)\rightarrow (B,d)$ a surjective morphism of $(r, \rho(R'))$-{H-mild} cdga's, that is, $\varphi$ is in $\text{CDGA}_r(R')$:
 
  To transform the projection:
$$p_{m}:\left({A}^{\otimes m+1}, d\right) \longrightarrow\left(\frac{{A}^{\otimes m+1}}{\left(\operatorname{ker} {{(\varphi)}}\right)^{\otimes m+1}}, \overline{d} \right)$$
into a morphism of some $(r,\rho(R))$-H-mild cdga, we take $R$ with $\rho(R)$ satisfying  (cf. the beginning of  Section $3$ for more details):
\begin{equation*}\label{K'}
m\rho(R')\leq \rho(R).
\end{equation*}
 Given such $R$,  (\ref{mm}) induces the following commutative diagram:
 \begin{equation*}
\begin{tikzcd}
\left(A^{\otimes m+1}, d\right) \arrow[r,"p_{m}"] \arrow[rd,hook,"i_m"']
&  \left(\frac{{A}^{\otimes m+1}}{\left(\operatorname{ker} {{(\varphi)}}\right)^{\otimes m+1}}, \overline{d} \right) \\
  &  (A^{\otimes m+1}\otimes \Lambda W_{(m)},d). \arrow[u,"\simeq"']
\end{tikzcd}
 \end{equation*}
 Thus, introducing  $\bar{\mu} _{m+1} : A^{\otimes m+1}\otimes \Lambda W_{(m)}\stackrel{\mu_{m+1}\otimes id_{\Lambda W_{(m)}}}{\longrightarrow} A\otimes \Lambda W_{(m)}$,
 where  $\mu_{A,n} : A^{\otimes n}\rightarrow A$ is the $n$-fold product of $A$,
we then obtain the following diagram:
$$\begin{tikzcd}
\left(A^{\otimes m+1}, d\right) \arrow[r,"i_m",hook] \arrow[d,"\mu_{m+1}"'] & (A^{\otimes m+1}\otimes \Lambda W_{(m)},d) \arrow[d,"\bar{\mu}_{m+1}"] \\
A \arrow[r,"j_m"',hook]  & (A\otimes \Lambda W_{(m)},\bar{d})
\end{tikzcd}$$
by which we extend the definition  $secat(\varphi)$ for  any surjective morphism $\varphi :(A,d)\rightarrow (B,d)$  in  $\text{CDGA}_r(R)$ satisfying (\ref{finitegenfree})
 as follows: 
\begin{equation}\label{secat}
 secat(\varphi) := \min\{m\; | \; \exists \; r_m: (A^{\otimes m+1}\otimes \Lambda W_{(m)},d)\rightarrow (A,d) \;  \hbox{ s. t.}\;  r_m\circ \iota_m = \mu_{m+1}\}.
\end{equation}
Notice that $r_m: (A^{\otimes m+1}\otimes \Lambda W_{(m)},d)\rightarrow (A,d)$ is a cdga morphism.
 
Let $(\Lambda V,d)\longrightarrow C^*(L)$ be a decomposable commutative model of a finite-type $(r,\rho(R))$-mild CW-complex  $X$. 
 Applying the above process to the $n$-fold product $\mu_{\Lambda V,n}:(\Lambda V)^{\otimes n}\rightarrow \Lambda V$ yields an extension of the notion of higher topological complexity to the category $CW_r^{r\rho(R)}(R)$ (recall that we assumed that $H_*(\Omega X, R)$ is torsion free): 
\begin{equation*}\label{TC-n}
  TC_n(X,R) := secat(\mu_{\Lambda V,n}).
\end{equation*}

Next we adapt the "non homotopy" invariant $sc$ introduced by Carrasquel in \cite{JCar}, to the case of principal ideal domains containing $\frac{1}{2}$. Let then  $\varphi :(A,d)\rightarrow (B,d)$ be a surjective morphism in  $\text{CDGA}_r(R)$ satisfying (\ref{finitegenfree}), we have a unique commutative model of the projection $\Gamma_m : (A,d)\rightarrow (\frac{A}{(\ker(\varphi))^{m+1}}, \bar{d})$:
$$\begin{tikzcd}
\left(A, d\right) \arrow[r,"\Gamma_{m}"] \arrow[rd,hook,"\iota_m"']
&  \left(\frac{A}{\left(\operatorname{ker} {{(\varphi)}}\right)^{m+1}}, \overline{d} \right) \\
  &  (A\otimes \Lambda W_{(m)},d), \arrow[u,"\simeq"']
\end{tikzcd}$$
across which we introduce the following
\begin{equation*}
  sc(\varphi) :=  \min\{m\; | \; \exists \; k_{m}:  (A\otimes \Lambda W_{(m)},d)\rightarrow (A,d) \; \hbox{a cdga morphism s. t.}\;  k_m\circ \iota_m = id_{(A,d)}\}.
\end{equation*}
Similarly, applying this definition to the $n$-fold product  $\mu_{\Lambda V,n}:(\Lambda V)^{\otimes n}\rightarrow \Lambda V$ associated to a space $X$ in $CW_r^{r\rho(R)}(R)$, such that $H_*(\Omega X, R)$ is torsion free, yields
\begin{equation*}
  tc_n(X,R) := sc(\mu_{\Lambda V,n}).
\end{equation*}
 
 Once again, following \cite{JCar} (see also \cite{JCar2}) we will give, in  $\text{DGA}_r(R)$, an equivalent definition to $secat(\varphi)$ and use it  to introduce, in Definition $3.1$, notions of $msecat$ and $Hsecat$. We also introduce their corresponding $msc$  and $Hsc$ in Definitions $4.1$.
  
The main result of this paper reads
 \begin{theo}
\begin{itemize}
\item[(i)] $secat(\varphi)\leq sc(\varphi)$,
\item[(ii)] $msecat(\varphi)\leq msc(\varphi)$,
\item[(iii)] $Hsecat(\varphi)\leq Hsc(\varphi)$.
\end{itemize}
\end{theo}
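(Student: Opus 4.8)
The plan is to obtain all three inequalities from a single lifting, organized around the $(m{+}1)$-fold product $\mu_{m+1}\colon A^{\otimes m+1}\to A$, which collapses the ``external'' tensor power of $K:=\ker\varphi$ onto its ``internal'' power: writing things out, $\mu_{m+1}\big(K^{\otimes m+1}\big)=K^{m+1}$ as ideals of $A$, so $\mu_{m+1}$ descends to a morphism $A^{\otimes m+1}/K^{\otimes m+1}\to A/K^{m+1}$ fitting into a commutative square with $p_m$ and $\Gamma_m$. I then transport this square through the relative models of $p_m$ and of $\Gamma_m$ — write the first inclusion $\iota_m\colon A^{\otimes m+1}\hookrightarrow A^{\otimes m+1}\otimes\Lambda W_{(m)}$ and the second $\iota'_m\colon A\hookrightarrow A\otimes\Lambda W'_{(m)}$ (two genuinely different models, in spite of the symbol $\Lambda W_{(m)}$ used for both on the preceding pages) — obtaining a commutative square
$$\begin{tikzcd}
A^{\otimes m+1} \arrow[r,"\iota'_m\circ\mu_{m+1}"] \arrow[d,hook,"\iota_m"'] & A\otimes\Lambda W'_{(m)} \arrow[d,"\simeq"] \\
A^{\otimes m+1}\otimes\Lambda W_{(m)} \arrow[r] & A/K^{m+1},
\end{tikzcd}$$
commutative because both composites realize $\Gamma_m\circ\mu_{m+1}$, in which the left vertical arrow is a relative model inclusion (hence has the left lifting property) and the right vertical arrow is a surjective quasi-isomorphism (surjective because $\Gamma_m$ is). By the lifting property — Proposition $2.2$ in $\mathrm{DGA}_r(R)$, Proposition $2.4$ in $\mathrm{CDGA}_r(R)$ — this square admits a diagonal $\Theta\colon A^{\otimes m+1}\otimes\Lambda W_{(m)}\to A\otimes\Lambda W'_{(m)}$, a morphism in the ambient category, with $\Theta\circ\iota_m=\iota'_m\circ\mu_{m+1}$.

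With this single $\Theta$ in hand, the three statements become one-line verifications. For (i), if $sc(\varphi)\le m$, choose a retraction $k_m\colon A\otimes\Lambda W'_{(m)}\to A$ with $k_m\circ\iota'_m=\mathrm{id}_A$ and put $r_m:=k_m\circ\Theta$; then $r_m\circ\iota_m=k_m\circ\Theta\circ\iota_m=k_m\circ\iota'_m\circ\mu_{m+1}=\mu_{m+1}$, so $secat(\varphi)\le m$ by $(\ref{secat})$, whence $secat(\varphi)\le sc(\varphi)$ (the case $sc(\varphi)=\infty$ being vacuous); this is the transcription to $\mathrm{DGA}_r(R)$ of Carrasquel's rational $secat\le sc$. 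For (ii), the identity $\Theta\circ\iota_m=\iota'_m\circ\mu_{m+1}$ says exactly that $\Theta$ is $A^{\otimes m+1}$-linear for the module structure on the target induced by $\mu_{m+1}$, so composing it with a retraction witnessing $msc(\varphi)\le m$ (in the homotopy category of $A$-modules) produces, via restriction of scalars, an $A^{\otimes m+1}$-module retraction of $\iota_m$ sitting over $\mu_{m+1}$, i.e.\ $msecat(\varphi)\le m$. For (iii), apply $H^*(-)$ throughout: the two vertical quasi-isomorphisms become isomorphisms, so a cohomology-level section witnessing $Hsc(\varphi)\le m$ is carried by $H^*(\Theta)$ and $H^*(\mu_{m+1})$ to one witnessing $Hsecat(\varphi)\le m$.

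What is not formal and has to be checked with care is the applicability of the lifting proposition: one must verify that the quotients $A^{\otimes m+1}/K^{\otimes m+1}$ and $A/K^{m+1}$, together with their relative models, still belong to $\mathrm{DGA}_r(R)$ (resp.\ $\mathrm{CDGA}_r(R)$) and meet the finiteness, connectivity and $(r{+}1)$-torsion-freeness requirements of $(\ref{finitegenfree})$ under which Proposition $2.2$ (resp.\ $2.4$) grants the lift. This is precisely where the arithmetic constraint $m\rho(R')\le\rho(R)$ fixed at the beginning of Section $3$ enters, keeping the cohomological degrees of these quotients inside the mild window $[r{+}1,\,r\rho(R)]$. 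I expect this verification — together with the secondary but essential bookkeeping of keeping $\Lambda W_{(m)}$ (model of $p_m$) apart from $\Lambda W'_{(m)}$ (model of $\Gamma_m$), since the whole argument lives on the comparison morphism $\Theta$ between them — to be the only real obstacle; everything downstream of $\Theta$ is a diagram chase.
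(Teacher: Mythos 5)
Your proposal is correct and follows essentially the same route as the paper: the paper also descends $\mu_{m+1}$ to $\overline{\mu}_{m+1}\colon A^{\otimes m+1}/(\ker\varphi)^{\otimes m+1}\to A/(\ker\varphi)^{m+1}$, lifts through the surjective quasi-isomorphism from the model of $\Gamma_m$ (Proposition 2.4) to get a comparison map $\theta$ with $\theta\circ i_m=\iota_m\circ\mu_{m+1}$, and composes it with the retraction witnessing $sc(\varphi)\le m$. Your added care about the two distinct models and about cases (ii) and (iii) only makes explicit what the paper leaves implicit.
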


Next we prove that for any surjective morphism we have
\begin{equation*}
nil \ker(H(\varphi))\leq secat(\varphi)\leq sc(\varphi) \leq Hnil \ker(\varphi) \leq nil \ker(\varphi),
\end{equation*}
 where $Hnil \ker(\varphi)$ is the homology nilpotency of $\ker(\varphi)$, that is, the smallest integer  $k$ for which $(\ker(\varphi))^{k+1}$ is contained in some acyclic ideal of $A$. As a corollary we have
\begin{equation*}
nil \ker(H(\mu_{\Lambda V,n}))\leq TC_n(X,R)\leq tc_n(X,R)\leq Hnil \ker(\mu_{\Lambda V,n})\leq nil \ker(\mu_{\Lambda V,n}).
\end{equation*}

The rest of the paper is structured as follows. The first section is dedicated to providing basic notions used throughout the paper while the second section is for proving the aforementioned Propositions $2.1$ to $2.3$, as well as their corresponding  lifting lemmas. 
In section three, we extend, to the category $\text{CDGA}_r(R)$, the notion of  sectional category of a surjective morphism in terms of commutative models and establish some inequalities. The fourth section generalizes the notion of $\operatorname{sc}$, introduced in \cite{JCar} in rational homotopy context,   to the category  $\text{CDGA}_r(R)$. We then  prove that it upper bounds sectional category. We recall in Section $5$, the notion of homology nilpotency and establish some lower and upper bounds of both $\operatorname{secat}$ and $\operatorname{sc}$. Lastly, we introduce in Section $6$ the notions of $\operatorname{Asecat}$ and $\operatorname{Asc}$  in the context of differential graded algebras over $R$ and prove that, in $\text{DGA}_r(R)$, their corrsponding $ATC(X,R)$ and $Atc(X,R)$ lowers respectively $TC(X,R)$ and $tc(X,R)$.

\section{Preliminaries}
In this section, we provide a brief reminder of essential notions used in this work.

A differential graded algebra $A$ (dga) is a graded algebra together with a differential in $A$ that is a derivation, in addition, if $aa'=(-1)^{|a||a'|}a'a$, $A$ is called a commutative differential graded algebra (cdga). A morphism of differential graded algebras $f : (A,d)\rightarrow (B,d)$ is a morphism of graded algebras compatible with the grading and $d$.

The free product of two algebras $A$ and $B$ is the coproduct in the category of algebras, namely, $A\sqcup B=T(A,B)/I$ where $T(A,B)=\bigoplus_n^\infty T_n$ where \cite[Remark p. 232]{BL}:
$$T_0=R,\quad T_1=A\oplus B,\quad T_2=(A\otimes A)\oplus (A\otimes B)\oplus (B\otimes A)\oplus (B\otimes B),\cdots \cdots $$
and $I$ is the two-sided ideal generated by elements of the form
$$a\otimes a'-aa',\quad b\otimes b'-bb', \quad 1_A-1_B.$$
The differential on $T(A,B)$ is that induced by those of $A$ and $B$. Clearly, it passes to the quotient modulo $I$ so that we obtain the dga structure on $A\sqcup B$.

In the category of commutative algebras, the coproduct becomes the tensor product.

A (left) module over a differential graded algebra $(A, d)$ is an $A$-module $M$, together with a differential $d$ in $M$ satisfying
$d(a\cdot m)=da\cdot m+(-1)^{|a|}a\cdot dm, \hspace*{0.2cm}m\in M,\hspace*{0.1cm}a\in A.$
A morphism of (left) modules over a dga $(A,d)$, is a morphism $f:(M,d)\rightarrow (N,d)$ of graded $A$-modules satisfying $df =(-1)^{|f|} fd$.

A quasi-isomorphism is a morphism inducing an isomorphism in homology.

A homotopy retraction of cdga (resp. $(A, d)$-module) for a cdga morphism $\psi : (A, d)\rightarrow (B, d)$ is a cdga (resp. $(A, d)$-module) morphism $r : (A\otimes\Lambda V,d)\rightarrow (A, d)$ such that $r\circ i = Id_A$ where $i : (A, d)\rightarrow (A\otimes\Lambda V,d)$ is a model of $\psi$.

\section{Free and commutative models}\label{models}

Let $R$ be a principal ideal domain. In this section, we prove the existence of a free model of a dga morphism or $R$ modules under certain hypothesis, and then we prove that the lifting property is guaranteed under the hypothesis that the quasi-isomorphism is surjective. Next we pass to the category of cdga's and we establish analogous results.

\begin{prop}\label{free case} Let $\left(A, d_A\right)$ and $\left(B, d_B\right)$ be two $R$-differential graded algebras such that
\begin{itemize}
\item[(i)] $H^0\left(A, d_A\right)=H^0\left(B, d_B\right)=R$ and $H^1\left(A, d_A\right)=H^1\left(B, d_B\right)=0$.
\item[(ii)] $H^i\left(A, d_A\right)$ and $H^i\left(B, d_B\right)$ are finitely generated $R$-modules for all $i$.
\item[(iii)] $H^2\left(B, d_B\right)$ is a free $R$-module.
\item[(iv)] $A$ is $R$-free.
\end{itemize}
Let $f: (A,d_A) \rightarrow (B,d_B)$ be a morphism of $R$-dga's such that
\begin{itemize}
\item[(v)] $H^2(f): H^2(A,d_A) \rightarrow H^2(B,d_B)$ is injective.
\end{itemize}
Then there exists a commutative diagram in the category of $R$-dga's of the form:
$$
\begin{tikzcd}
(A, d_A) \arrow[r,"f"]\arrow[rd,hook,"i"'] & (B, d_B)  \\
 & (A \sqcup TV, d) \arrow[u,"\varphi"'] ,
\end{tikzcd}
$$
where:
\begin{itemize}
\item[1)] $V$ is a free $R$-module and $TV$ is the tensor algebra over $V$.
\item[2)] $A\sqcup T(V)$ designate the free product of $A$ and $TV$, and $i$ is the inclusion ($i(a)=a$).
\item[3)] $\varphi$ is a quasi-isomorphism of $R$-dga's.
\end{itemize}
\end{prop}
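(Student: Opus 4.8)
The plan is to build $(A\sqcup TV, d)$ by a standard cell-attachment (Quillen-type) construction, adding generators to $V$ degree by degree so as to first make $\varphi$ surjective on cohomology, and then to kill the kernel, while always maintaining that the partially-built object maps quasi-isomorphically onto $B$ through dimension $n$. First I would observe that hypothesis (v), together with $H^0(A)=H^0(B)=R$ and $H^1(A)=H^1(B)=0$, means that $H^i(f)$ is already an isomorphism for $i\le 1$ and injective for $i=2$; the usual minimal-model induction therefore starts cleanly. I would set up an increasing filtration $A=A(1)\subset A(2)\subset\cdots$ with $A(n)=A\sqcup TV(n)$, $V(n)=\bigoplus_{i\le n}V^i$, together with extensions $\varphi_n:A(n)\to B$ of $f$, and prove by induction on $n$ that one can choose $V^{n+1}$ (a free $R$-module, which is where we use that $R$ is a PID and that the relevant cohomology groups are finitely generated, hence that submodules and quotients stay finitely generated and that free complements exist once we mod out torsion) so that $H^i(\varphi_{n+1})$ is an isomorphism for $i\le n+1$ and surjective (indeed we really want injective through a shifted range) for $i=n+2$. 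The colimit $A\sqcup TV=\bigcup_n A(n)$ with the induced map $\varphi$ is then the desired quasi-isomorphism, and $i$ is the evident inclusion of the first filtration stage.

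The inductive step has the usual two halves. To fix surjectivity of $H^{n+1}(\varphi_n)$, I would pick cocycles in $B^{n+1}$ whose classes generate a complement (modulo the image and modulo torsion — here again we use that $H^{n+1}(B)$ is finitely generated over the PID $R$, and for the low-degree anchoring that $H^2(B)$ is $R$-free, hypothesis (iii)) of the image of $H^{n+1}(\varphi_n)$, adjoin a free $R$-module of new generators in degree $n+1$ with zero differential, and extend $\varphi_n$ by sending each new generator to the chosen cocycle; this makes $H^{\le n+1}$ onto without disturbing lower degrees. To fix injectivity in degree $n+1$, I would take cocycles in $A(n)^{n+2}$ representing a generating set of $\ker H^{n+2}$ that maps to $0$ in $H^{n+2}(B)$ — write such an image as $d\beta$ in $B$ — adjoin new generators in degree $n+1$ whose differential is the given cocycle and whose image under the extended $\varphi$ is the chosen $\beta$; a short diagram chase shows this kills exactly the unwanted classes in degree $n+2$ and creates nothing new in degree $n+1$ or below. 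The freeness of the modules of new generators is guaranteed by passing to torsion-free quotients and lifting $R$-bases, which is legitimate over a PID; one must be mildly careful that "injective on $H^{2}$" (rather than "iso on $H^2$") is exactly the input that lets the $n=1\to 2$ step go through without needing extra degree-$2$ generators with nonzero differential.

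The technical heart — and the step I expect to be the main obstacle — is the bookkeeping over a PID rather than a field: over $\mathbb{Q}$ one simply chooses vector-space bases and complements, whereas here I must repeatedly split off free parts of finitely generated $R$-modules and check that the ideals/cokernels produced by attaching cells remain finitely generated and that no torsion sneaks into the degrees where we need isomorphisms. Concretely, when I adjoin generators of degree $n+1$ with differential hitting a submodule of cocycles in $A(n)^{n+2}$, I need that submodule to admit a finite $R$-generating set and that the induced presentation of $H^{n+2}$ behaves well; this is where hypotheses (ii) (finite generation in every degree) and (iv) ($A$ is $R$-free, so $A\sqcup TV(n)$ is $R$-free in each degree and its cohomology is computed by a complex of free modules) are essential. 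A secondary subtlety is that in the free (non-commutative) setting the free product $A\sqcup TV$ has, in each degree, an explicit $R$-basis described by the tensor-algebra decomposition recalled in the Preliminaries, so one should check that adjoining generators one degree at a time genuinely produces an $R$-free graded algebra with the claimed filtration — but this is routine once the basis is written down. Apart from this PID bookkeeping, the argument is the classical Sullivan/Quillen relative-model construction, and the colimit step is immediate because cohomology commutes with the filtered colimit of the $A(n)$.
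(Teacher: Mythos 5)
Your overall strategy coincides with the paper's: an inductive cell-attachment, adjoining degree-$(n+1)$ generators with zero differential to force surjectivity of $H^{n+1}$, and degree-$(n+1)$ generators $u$ with $du=z$ to kill a finite generating set of $\ker H^{n+2}(\varphi_n)$, all controlled by the structure theorem for finitely generated modules over the PID $R$. However, there is a genuine gap at the one point where the PID hypothesis actually bites, and you assert exactly the statement that fails: that killing the kernel classes ``creates nothing new in degree $n+1$ or below''. If $[z]\in\ker H^{n+2}(\varphi_n)$ is a torsion class, say $\lambda[z]=0$ with $\lambda$ a non-unit, then $\lambda z=dy$ for some $y\in A(n)^{n+1}$, and after adjoining $u$ with $du=z$ the element $y-\lambda u$ is a new cocycle of degree $n+1$. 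Its class is genuinely new (it cannot be reduced to old classes, since $u$ occurs with the non-invertible coefficient $\lambda$), and it can lie in the kernel of the extended $H^{n+1}(\varphi)$, destroying the isomorphism in degree $n+1$ that your induction hypothesis had already secured. Over a field this phenomenon is absent, because every nonzero class one kills is non-torsion and $du=z$ then forces the coefficient of $u$ in any new cocycle to vanish; over a PID it is the heart of the matter.

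The paper's proof spends most of its length repairing precisely this: at step $k$ it splits the basis of $\ker H^k(\psi_{k-1})$ into classes of infinite and finite order, and for the torsion ones it adjoins further generators two degrees down ($V_k^{k-2}$, with $dw_k^i=y_k^i-\lambda_k^i u_k^i$) to kill the newly created degree-$(k-1)$ classes, together with additional closed generators ($W_k^{k-1}$, $W_k^{k-2}$) to restore surjectivity in the degrees disturbed by these corrections. Your proposal names ``PID bookkeeping'' as the expected obstacle but does not supply this descent, so the inductive step as written does not close. (A minor point in your favour: defining $\varphi(u)$ directly as the chosen $\beta$ with $d\beta=\varphi(z)$ is cleaner than the paper's two-stage definition of $\psi_{k-1}(u_k^i)$ and renders one of its correction modules unnecessary; but it does not address the torsion problem above.)
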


\begin{proof}
We proceed by induction $k\geqslant 0$.

$\bullet k=0$. We put $V_0^0=R$ then $TV_0^0\cong R$ and $A\sqcup TV_0^0\cong A$. Then $\varphi_0=f$.

$\bullet k=1$. Since $H^1(A,d_A)=H^1(B,d_B)=0$, we take $V_1^1$ to be $0$. Thus $\varphi_1=f$.

$\bullet k=2$. By hypothesis, we have $H^2(f):H^2(A,d_A)\rightarrow H^2(B,d_B)$ is injective. We write $H^2(B,d_B)=\operatorname{Im}(H^2(f))\oplus \mathcal{B}_2^2$. Since $R$ is a principal ideal domain and $H^2(B,d_B)$ is finitely generated, then so is $\mathcal{B}_2^2$.

Denote by $[\beta_2^1]$, $\cdots$, $[\beta_2^{r_2}]$ a basis of $\mathcal{B}_2^2$, so $\beta_2^i$ is a cocycle, $d\beta_2^i=0$. We now introduce $$V_2^2=\langle \alpha_2^1,\cdots ,\alpha_2^{r_2} \rangle ,$$ and we extend $d_A$ to $A\sqcup TV_2^2$ by putting $d\alpha_2^i=0$ and $\varphi_2(\alpha_2^i)=\beta_2^i$ for $1\leqslant i\leqslant r_2$.
Therefore $$\varphi_2:(A\sqcup TV_2^2,d)\rightarrow (B,d_B)$$ is a morphism satisfying $d_B\circ\varphi_2=\varphi_2\circ d$ and extending $f$. Clearly $H^2(\varphi_2):H^2(A\sqcup TV_2^2,d)\rightarrow H^2(B,d_B)$ is surjective. The classes that derive from this step are exactly $[\alpha_2^i]$, $1\leqslant i\leqslant r_2$, such that $H^2(\varphi_2)([\alpha_2^i])=[\beta_2^i]\neq 0$, that is, $H^2(\varphi_2)$ remains injective. Therefore $H^2(\varphi_2)$ is an isomorphism.

If $H^3(\varphi_2)$ is not surjective, we progress similarly to the previous case and acquire $$V_2^3=\langle {\alpha'}_2^1,\cdots ,{\alpha'}_2^{{r'}_2} \rangle $$ and we extend $\varphi_2$ to what we denote, for the time being,  $$\psi_2:(A\sqcup T(V_2^2\oplus V_2^3),d)\rightarrow (B,d_B)$$ such that $H^3(\psi_2)$ is surjective.

So far, we have constructed $\psi_2:(A\sqcup T(V_2^2\oplus V_2^3),d)\rightarrow (B,d_B)$ such that:
\begin{itemize}
\item[($a_2$)] $V=V_2=V_2^2\oplus V_2^3$ is a free $R$-module.
\item[($b_2$)] $H^{\leqslant 2}(\psi_2)$ is an isomorphism.
\item[($c_2$)] $H^3(\psi_2)$ is surjective.
\end{itemize}
Notice that the subscripts represent the step and not the degree.

Now suppose that we have constructed $\psi_{k-1}:(A\sqcup T(V_{\leqslant k-1}),d)\rightarrow (B,d_B)$ satisfying:
\begin{itemize}
\item[($a_{k-1}$)] $V=V_{\leqslant k-1}$ is a free $R$-module.
\item[($b_{k-1}$)] $H^{\leqslant k-1}(\psi_{k-1})$ is an isomorphism.
\item[($c_{k-1}$)] $H^k(\psi_{k-1})$ is surjective.
\end{itemize}
So that now we are in the $k$-th step. We know that $H^k(A\sqcup T(V_{\leqslant k-1}),d)$ is finitely generated since $H^k(A,d_A)$ is finitely generated and the number of generating classes, of degree less than $k-1$, following from $TV_{\leqslant k-1}$ are finite. Again since $R$ is a principal ideal domain, then $\ker (H^k(\psi_{k-1}))$ is also finitely generated.

Using the classifying theorem of finitely generated $R$-modules, we choose $[z_k^1]$, $\cdots$, $[z_k^{n_k}]$ to be a basis of $\ker (H^k(\psi_{k-1}))$ such that for $i\leqslant m_k\leqslant n_k$, $[z_k^i]$ has finite order and for $m_k<i\leqslant n_k$, $[z_k^1]$ has infinite order. Consequently, there exist $\lambda_k^1, \cdots , \lambda_k^{m_k}\in R$, satisfying $x_k^i=\lambda_k^i z_k^i$ is a coboundary for $1\leqslant i\leqslant m_k$. As $[\psi_{k-1}(z_k^i)]=0$, there is then  $b_k^1, \cdots ,b_k^{n_k}$ in $B$ such that 
$$\psi_{k-1}(z_k^i)=d_Bb_k^i,\; 1\leqslant i\leqslant n_k.$$
Now we introduce $$V_k^{k-1}=u_k^1R\oplus\cdots\oplus u_k^{n_k}R$$ with  $|u_k^i|=k-1$, and extend $d$ on $A\sqcup T(V_{\leqslant k-1}\oplus V_k^{k-1})$ by putting, for $1\leqslant i\leqslant n_k$, 
$$du_k^i=z_k^i,\; 1\leqslant i\leqslant n_k.$$
Next, we extend $\psi_{k-1}$ by defining the image of each $u_k^i$, for the time being, we keep noting the morphism by $\psi_{k-1}$. We should have $$d_B\circ \psi_{k-1}(u_k^i)= \psi_{k-1}(du_k^i)=\psi_{k-1}(z_k^i) =d_Bb_k^i.$$ 
This creates new non-zero classes $[\psi_{k-1}(u_k^i)-b_k^i]$ in $H^{k-1}(B,d_B)$, some  $1\leqslant i\leqslant {n'_k}$.
To insure that $H^{k-1}(\psi_{k-1})$ is still surjective, we introduce a new $R$-module 
$$W_k^{k-1}={u'}_k^1R\oplus\cdots\oplus {u'}_k^{n'_k}R$$
 such that $d{u'}_k^i=0$ and $\psi_{k-1}({u'}_k^i)=\psi_{k-1}(u_k^i)-b_k^i$, $1\leqslant i\leqslant {n'_k}$.
We then put: $$\psi_{k-1}({u}_k^i)=\psi_{k-1}({u'}_k^i)+b_k^i$$ 

Let us now deal with the impact of coboundaries $x_k^i$. If $y_k^1$, $\cdots$, $y_k^{m_k}$ in $\left( A\sqcup T(V_{\leqslant k-1}) \right)^{k-1}$ are such that $dy_k^i=x_k^i$,  then  $x_k^i=\lambda_k^i z_k^i=\lambda_k^i du_k^i=d(\lambda_k^i u_k^i)$ implies   $d(y_k^i-\lambda_k^iu_k^i)=0$. This induces new classes: $$[y_k^i-\lambda_k^iu_k^i]\in H^{k-1}(A\sqcup T(V_{\leqslant k-1}\oplus V_k^{k-1}\oplus W_k^{k-1}),d),\; 1\leqslant i\leqslant m_k.$$
Now, some of these cohomology classes may be in $\ker (H^{k-1}(\psi_{k-1}))$, resulting in $H^{k-1}(\psi_{k-1})$ not being injective. In order to remedy this, we introduce certain number (${m'}_k\leqslant m_k$) of elements $w_k^i$ with $|w_k^i|=k-2$, forming $$V_k^{k-2}=w_k^1R\oplus \cdots\oplus w_k^{{m'}_k}R$$ and we extend $d$ by setting
\begin{equation}\label{indecomposability}
dw_k^i=y_k^i-\lambda_k^iu_k^i \; \hbox{for} \; 1\leq i\leq {m'}_k.
\end{equation} 
To define $\psi_{k-1}(w_k^i)$, we refer back to the equation
$$ d_B(\psi_{k-1}(w_k^i))=\psi_{k-1}(dw_k^i)=\psi_{k-1}(y_k^i-\lambda_k^iu_k^i)$$
which involves  $c_k^i\in B$ for $1\leqslant i\leqslant {m'}_k$ with $|c_k^i|=k-2$ and such that $\psi_{k-1}(y_k^i-\lambda_k^iu_k^i)=dc_k^i$. Hence,
$$d_B(\psi_{k-1}(w_k^i)-c_k^i)=0,\; 1\leqslant i\leqslant {m'}_k.$$
Consequently, new non-zero classes   $[\psi_{k-1}(w_k^i)-c_k^i],\; \text{ for } 1\leqslant i\leqslant {m''}_k\leqslant {m'}_k$, are created  in $H^{k-2}(B,d_B)$, so that, following the same process we acquire $$W_k^{k-2}={w'}_k^1R\oplus \cdots\oplus {w'}_k^{{m''}_k}R,$$ with  $|{w'}_k^i|=k-2$ and $d{w'}_k^i=0$. We then   extend $\psi_{k-1}$ to what we  finally denote:
$$\varphi_k:\left( A\sqcup T(V_{\leqslant k-1}\oplus V_k^{k-1}\oplus V_k^{k-2}\oplus W_k^{k-1}\oplus W_k^{k-2}),d \right)\longrightarrow (B,d_B), $$
by putting $\varphi_k({w'}_k^i)=\psi_{k-1}(w_k^i)-c_k^i$. 

Notice that this extension resolves the issue of $H^{k-2}(\psi_{k-1})$ not being surjective. Therefore, we have $H^{\leqslant k}(\varphi_k)$ is an isomorphism.

Next, we write $H^{k+1}(B,d_B)=\operatorname{Im}(H^{k+1}(\varphi_k))\oplus \mathcal{B}_k$, $\mathcal{B}_k$ is finitely generated and let $[\beta_k^1]$, $\cdots$, $[\beta_k^{r_k}]$ be its basis such that $d\beta_k^i=0$. We introduce $$V_k^{k+1}=\alpha_k^1R\oplus\cdots\oplus \alpha_k^{r_k}R,$$ with  $|\alpha_k^i|=k+1$, and $d\alpha_k^i=0$. 

Finally, we put
\begin{equation*}\label{V-k}
V_k=V_k^{k-1}\oplus W_k^{k-1}\oplus V_k^{k-2}\oplus W_k^{k-2}\oplus V_k^{k+1}
\end{equation*}
 and extend, once again, $\varphi_k$ to
$$\psi_k:\left( A\sqcup T(V_{\leqslant k}),d \right)\longrightarrow (B,d_B) ,$$
such that $\psi_k(\alpha_k^i)=\beta_k^i$ for $1\leqslant i\leqslant r_k$.

Consequently, we have $H^{k+1}(\psi_k)$ is surjective, and $H^k(\psi_k)$ remains an isomorphism since there is no new cocycles of degree $k$. This finishes the $k$-th step and (since chomologies are assumed finitely generated)   by induction argument the proof of the proposition.
\end{proof}

If $(A,d)=(R, 0)$, we get  $\Phi: (TV,d)\stackrel{\simeq}{\rightarrow} (B,d)$ (or $(TV,d)$ for short) which we  call a {\it free  model} of $(B,d)$. If $R$ is a field, the differential $d$ is decomposable in the sense that $d: V\rightarrow T^{\geq 2}V$ (i.e. (\ref{indecomposability}) above does not exist). If $R$ is not a field, we still have a sort of minimality in the sense that, for each $i$,
 there is a non-invertible $r_i\in R$ such that $d: V^i \rightarrow r_iV^{i+1}$. In both cases,  $(TV,d)$ is said to be {\it minimal} and we call it a {\it minimal free model} of $(B,d)$. 

Notice that, if we put respectively:
$$
\left\{ \begin{array}{l}
V(0) =0,\; V(1) = V^2_2,\\
V(2) = V(1) \oplus V_3^2\oplus W_3^2 \oplus W_3^1,\\
V(3) = V(2)\oplus V_3^1\oplus V_3^4,\\
\vdots\qquad \qquad \vdots\\
V(k) = V(k-1) \oplus V_k^{k-1}\oplus W_k^{k-1} \oplus W_k^{k-2}\\
V(k+1) = V(k)\oplus V_k^{k-2}\oplus V_k^{k+1}.
\end{array}
\right.
$$
Clearly, this is an increasing sequence. Let then $V=\bigcup_iV(i)$.
We then  get $(A\sqcup T(V), d)$ as {\it free extension} in the sense \cite[Appendix]{HL}.
 The inclusion $\iota : (A,d) \hookrightarrow (A\sqcup TV,d)$ will be called a {\it free model} of $f$.
 Moreover, following the notation as before,  every element $z_{k+1}^i =du_{k+1}^i\in \left( T(V_{\leqslant k}) \right)^{k+1} $ cannot be expressed by a single element of degree $k+1$  (specially, since $V_1=0$), therefore we necessarily have $z_{k+1}^i\in T^{\geqslant 2}(V_{\leqslant k})$.
\begin{prop}\label{rel-free case}
Given a diagram of $R$-dga's
$$\begin{tikzcd}
 & (A,d_A)\arrow[d,"\eta","\simeq"']\\
 (TV,d)\arrow[r,"\psi"'] & (B,d_B)
\end{tikzcd}$$
such that:
\begin{itemize}
\item[(i)] $(TV,d)$ is a minimal dga.
\item[(ii)] $\eta$ is a surjective quasi-isomorphism.
\end{itemize}
Then there exists a morphism $\varphi :(TV,d)\rightarrow (A,d_A)$ such that $\eta\circ\varphi =\psi$.
\end{prop}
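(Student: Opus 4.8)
The plan is to construct $\varphi$ by induction along a free-extension filtration of $V$, using that $\eta$ is a surjective quasi-isomorphism in exactly the classical way one lifts a Sullivan algebra against a trivial fibration. Since $(TV,d)$ is minimal it is, in particular, a free extension of $(R,0)$ in the sense recalled after Proposition \ref{free case}; hence $V$ admits an exhaustive increasing filtration $0=F(0)\subseteq F(1)\subseteq F(2)\subseteq\cdots$, $V=\bigcup_{j}F(j)$, with $F(j)=F(j-1)\oplus M_j$ as $R$-modules and $d(M_j)\subseteq T(F(j-1))$ for every $j\geq 1$ (the generators produced in Proposition \ref{free case}, suitably ordered within each construction step, realise such a filtration). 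Because $T(F(j))=T(F(j-1))\sqcup T(M_j)$, a dga morphism out of $T(F(j))$ extending a given one on $T(F(j-1))$ amounts precisely to a choice, for each element of a basis of $M_j$, of a value in $A$ compatible with $d$.

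I would then build dga morphisms $\varphi_j\colon(T(F(j)),d)\to(A,d_A)$ with $\varphi_j|_{T(F(j-1))}=\varphi_{j-1}$ and $\eta\circ\varphi_j=\psi|_{T(F(j))}$. For $j=0$, $T(F(0))=R$ and $\varphi_0$ is the unit, which lifts $\psi|_R$ trivially. Assuming $\varphi_{j-1}$ constructed, let $x$ run over a basis of $M_j$ and put $z:=dx\in T(F(j-1))$. Then $z$ is a cocycle ($dz=d^2x=0$), so $\varphi_{j-1}(z)\in A$ is a cocycle, and $\eta(\varphi_{j-1}(z))=\psi(z)=\psi(dx)=d_B\psi(x)$ is a coboundary in $B$. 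As $H(\eta)$ is injective, $[\varphi_{j-1}(z)]=0$ in $H(A,d_A)$, so $\varphi_{j-1}(z)=d_Aa$ for some $a\in A^{|x|}$. Then $d_B\eta(a)=\eta(\varphi_{j-1}(z))=d_B\psi(x)$, so $\zeta:=\eta(a)-\psi(x)$ is a cocycle in $B^{|x|}$. Surjectivity of $H(\eta)$ yields a cocycle $c\in A^{|x|}$ and $\beta\in B^{|x|-1}$ with $\eta(c)=\zeta+d_B\beta$; surjectivity of $\eta$ yields $\tilde\beta\in A^{|x|-1}$ with $\eta(\tilde\beta)=\beta$. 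Setting $\varphi_j(x):=a-c+d_A\tilde\beta$, one checks $d_A\varphi_j(x)=d_Aa=\varphi_{j-1}(z)=\varphi_{j-1}(dx)$ and $\eta(\varphi_j(x))=\eta(a)-\eta(c)+d_B\beta=\eta(a)-\zeta=\psi(x)$. Extending multiplicatively over $M_j$ gives an algebra morphism $\varphi_j$ on $T(F(j))$ that commutes with $d$ on all generators --- hence, $d$ being a derivation and $\varphi_j$ an algebra map, a dga morphism --- and $\eta\varphi_j=\psi$ holds on $T(F(j))$ since both sides are algebra maps agreeing on the generators. Finally $\varphi:=\bigcup_j\varphi_j\colon(TV,d)\to(A,d_A)$ satisfies $\eta\circ\varphi=\psi$, as required.

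As for where the work lies: the homotopical core --- vanishing of the extension obstruction $[\varphi_{j-1}(dx)]$ in $H(A)$ because $\eta$ is a quasi-isomorphism, together with the subsequent correction of the $\eta$-image using that $\eta$ is both surjective and a quasi-isomorphism --- is the standard lifting-against-a-trivial-fibration argument and proceeds essentially without choice. The one genuinely delicate point is organisational: verifying that the generators of Proposition \ref{free case} really can be arranged into a bona fide free-extension filtration, i.e. that the indecomposable relations $dw_k^i=y_k^i-\lambda_k^i u_k^i$ of (\ref{indecomposability}) create no circular dependence (one must adjoin each $u_k^i$ before the corresponding $w_k^i$). Note that the decomposability refinement in the definition of minimality plays no role here; only the free-extension structure of $(TV,d)$ over $(R,0)$ is used, so the same argument would give the relative lifting property for free models over $(A,d_A)$.
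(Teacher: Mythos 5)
Your proof is correct and follows essentially the same route as the paper: an induction along the free-extension filtration $V=\bigcup_k V(k)$, lifting each generator $v_\alpha$ by producing $a_\alpha\in A$ with $d_Aa_\alpha=\varphi(dv_\alpha)$ and $\eta(a_\alpha)=\psi(v_\alpha)$. The only difference is that the paper asserts the existence of such an $a_\alpha$ in one line from ``$\eta$ is a surjective quasi-isomorphism,'' whereas you spell out the standard obstruction-and-correction argument (injectivity of $H(\eta)$ to kill $[\varphi(dv_\alpha)]$, then surjectivity of $H(\eta)$ and of $\eta$ to adjust the image), which is a welcome amplification rather than a departure.
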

\begin{proof}
We proceed by induction  as follows:\\
Assume that $\varphi$ is constructed in $V(k-1)$, and let $v_\alpha$ be a basis of $V_k^{k-1}\oplus W_k^{k-1} \oplus W_k^{k-2}$. As $dv_\alpha\in A\sqcup T(V(k-1))$, $\varphi (dv_\alpha)$ is well-defined and we have $d_B\varphi(dv_\alpha)=\varphi (d^2v_\alpha)=0$.

By hypothesis, we have
$$\eta\circ\varphi(dv_\alpha)=\psi (dv_\alpha)=d_B\psi(v_\alpha),$$
and since $\eta$ is a surjective quasi-isomorphism, there exists $a_\alpha$ in $A$ such that $\eta (a_\alpha)=\psi (v_\alpha)$ and $d_Aa_\alpha =\varphi (dv_\alpha)$. We then extend $\varphi$ to $V_k^{k-1}\oplus W_k^{k-1} \oplus W_k^{k-2}$ by setting $\varphi (v_\alpha)=a_\alpha$, consequently we obtain a morphism $$\varphi :TV(k)\rightarrow (A,d_A)$$ satisfying $d_A\varphi (v_\alpha)=\varphi (dv_\alpha)$.
\\
We then do the same to extend $\varphi$ to $V_k^{k-2}\oplus V_k^{k+1}$ and therefore to obtain a morphism $$\varphi :TV(k+1)\rightarrow (A,d_A)$$ satisfying $d_A\varphi (v_\alpha)=\varphi (dv_\alpha)$.
\end{proof}

In the case of commutative differential graded algebras, following the same process as in the proof of Proposition \ref{free case} yields the commutative model of a morphism of cdga algebras.
\begin{prop}\label{commodel} Let $\left(A, d_A\right)$ and $\left(B, d_B\right)$ be two commutative differential graded algebras over $R$ satisfying:
\begin{itemize}
\item[(i)] $H^0\left(A, d_A\right)=H^0\left(B, d_B\right)=R$ and $H^1\left(A, d_A\right)=H^1\left(B, d_B\right)=0$.
\item[(ii)] $H^i\left(A, d_A\right)$ and $H^i\left(B, d_B\right)$ are finitely generated $R$-modules for all $i$.
\item[(iii)] $H^2\left(B, d_B\right)$ is a free $R$-module.
\item[(iv)] $A$ is $R$-free.
\end{itemize}
Let $f: (A,d_A) \rightarrow (B,d_B)$ be a morphism of $R$-cdga's such that
\begin{itemize}
\item[(v)] $H^2(f): H^2(A,d_A) \rightarrow H^2(B,d_B)$ is injective.
\end{itemize}
Then there exists a commutative diagram of $R$-cdga's of the form:
$$
\begin{tikzcd}
(A, d_A) \arrow[r,"f"]\arrow[rd,hook,"i"'] & (B, d_B)  \\
 & (A \otimes\Lambda V, d) \arrow[u,"\varphi"'] ,
\end{tikzcd}
$$
where:
\begin{itemize}
\item[1)] $V$ is a free $R$-module and $\Lambda V$ is the commutative algebra over $V$.
\item[2)] $i$ is the inclusion ($i(a)=a$).
\item[3)] $\varphi$ is a quasi-isomorphism of $R$-cdga's.
\end{itemize}
\end{prop}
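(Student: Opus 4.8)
The plan is to mimic, almost verbatim, the inductive construction in the proof of Proposition~\ref{free case}, replacing throughout the tensor algebra $T(-)$ and the free products $A\sqcup T(-)$ by the free graded-commutative $R$-algebra $\Lambda(-)$ and the relative free extensions $A\otimes\Lambda(-)$. Concretely, I would build an increasing chain of free $R$-modules $V_{\leqslant 0}\subseteq V_{\leqslant 1}\subseteq\cdots$, all concentrated in degrees $\geqslant 2$ (no degree-$1$ generators, since $H^1=0$), together with cdga morphisms $\psi_k:(A\otimes\Lambda V_{\leqslant k},d)\to(B,d_B)$ extending $f$ and such that, after the $k$-th step, $H^{\leqslant k}(\psi_k)$ is an isomorphism and $H^{k+1}(\psi_k)$ is surjective. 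Taking $V=\bigcup_k V_{\leqslant k}$ and $\varphi=\operatorname{colim}_k\psi_k$ then yields the asserted relative model, with $i:(A,d)\hookrightarrow(A\otimes\Lambda V,d)$, $a\mapsto a\otimes 1$. The low-degree steps $k=0,1,2$ are carried out exactly as in Proposition~\ref{free case}, using (i), the injectivity of $H^2(f)$ from (v) and the freeness of $H^2(B,d_B)$ from (iii) to split off a finitely generated complement of $\operatorname{Im}(H^2(f))$ and adjoin closed degree-$2$ generators mapping onto a basis of it.

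For the generic step $k$, I would run the same four sub-steps as in Proposition~\ref{free case}: adjoin degree-$(k-1)$ generators $u_k^i$ with $du_k^i=z_k^i$, where $[z_k^1],\dots,[z_k^{n_k}]$ is a basis of $\ker H^k(\psi_{k-1})$ adapted to the torsion decomposition of this module over the PID $R$, and extend $\psi_{k-1}$ using $b_k^i\in B$ with $\psi_{k-1}(z_k^i)=d_Bb_k^i$; then repair the loss of surjectivity in degree $k-1$ by adjoining closed generators ${u'}_k^i$ and resetting $\psi_{k-1}(u_k^i)$; then repair the loss of injectivity in degree $k-1$ coming from the torsion relations $x_k^i=\lambda_k^iz_k^i=d(\lambda_k^iu_k^i)$ by adjoining degree-$(k-2)$ generators $w_k^i$ with $dw_k^i=y_k^i-\lambda_k^iu_k^i$, and in turn repair the ensuing loss of surjectivity in degree $k-2$ by closed generators ${w'}_k^i$; and finally make $H^{k+1}$ surjective by splitting $H^{k+1}(B,d_B)=\operatorname{Im}(H^{k+1}(\varphi_k))\oplus\mathcal{B}_k$ and adjoining closed degree-$(k+1)$ generators mapping onto a basis of $\mathcal{B}_k$. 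Setting $V_k$ equal to the direct sum of all modules adjoined at this step gives $\psi_k$, and the verification that $H^{\leqslant k}(\psi_k)$ is an isomorphism and $H^{k+1}(\psi_k)$ is surjective is formally identical to the one in Proposition~\ref{free case}.

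Only two points require genuine rather than cosmetic adaptation, and the second is the main obstacle. First, that $d$ and $\psi_k$ extend correctly in the commutative setting: since $\Lambda V_{\leqslant k}$ is the \emph{free} graded-commutative $R$-algebra on $V_{\leqslant k}$, a graded derivation of $A\otimes\Lambda V_{\leqslant k}$ extending $d_A$ is freely determined by its values on the generators, and here each $d(\text{new generator})$ is declared equal to a cocycle already lying in the cdga built at the previous stage, so $d^2=0$; the relation $d(u^n)=n\,u^{n-1}\,du$ for an even generator $u$ is automatically consistent, and $x^2=0$ for an odd generator $x$ is compatible with $d$ by the usual sign computation, so nothing beyond the ring structure of $R$ is used here. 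Likewise $\psi_k$ extends to an algebra map because $B$ is commutative. Second --- the crux --- I must check that the induction stays well-founded, i.e. that at every step $\ker H^k(\psi_{k-1})$, the relevant cokernels in degrees $k-1$ and $k-2$, and the complement $\mathcal{B}_k$ are all finitely generated, so that only finitely many generators are adjoined in each degree. In Proposition~\ref{free case} this rested on $T(V_{\leqslant k-1})$ being degreewise finite-rank free; in the commutative case the same holds because $V_{\leqslant k-1}$ is concentrated in degrees $\geqslant 2$ with finitely many generators in each degree, so for fixed $n$ only commutative monomials of length $\leqslant n/2$ contribute and $(\Lambda V_{\leqslant k-1})^n$ is again finite-rank free; then, $A$ being $R$-free by (iv), the K\"unneth theorem over the PID $R$ (with its $\operatorname{Tor}$ term) shows $H^n(A\otimes\Lambda V_{\leqslant k-1})$ is finitely generated since each $H^j(A,d_A)$ is, by (ii). As submodules and quotients of finitely generated modules over a PID are finitely generated, every module appearing in the construction is, the process terminates degree by degree, and the colimit $\varphi:(A\otimes\Lambda V,d)\to(B,d_B)$ is a quasi-isomorphism extending $f$, as required.
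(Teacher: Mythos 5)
Your proposal is correct and takes essentially the same route as the paper, which likewise proves this proposition by rerunning the inductive construction of Proposition~\ref{free case} with $A\sqcup T(-)$ replaced by $A\otimes\Lambda(-)$; in fact you supply more detail than the paper does (notably the finite-generation check via the K\"unneth theorem and the freeness of $(\Lambda V_{\leqslant k-1})^n$), since the paper's own proof simply states the inductive hypothesis and asserts that the same process goes through.
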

\begin{proof}
We proceed by induction on $k\geqslant 0$, and we suppose that we have constructed $\psi_{k-1}:(A\otimes \Lambda (V_{\leqslant k-1}),d)\rightarrow (B,d_B)$ satisfying:
\begin{itemize}
\item[($a_{k-1}$)] $V=V_{\leqslant k-1}$ is a free $R$-module.
\item[($b_{k-1}$)] $H^{\leqslant k-1}(\psi_{k-1})$ is an isomorphism.
\item[($c_{k-1}$)] $H^k(\psi_{k-1})$ is surjective.
\end{itemize}

Notice that in this case, the free product and the tensor algebra become respectively the tensor product and the commutative algebra. Therefore, following the same process as in the  non-commutative case, we get $\psi_{k}:(A\otimes \Lambda (V_{\leqslant k}),d)\rightarrow (B,d_B)$ satisfying:
\begin{itemize}
\item[($a_{k}$)] $V=V_{\leqslant k}$ is a free $R$-module.
\item[($b_{k}$)] $H^{\leqslant k}(\psi_{k})$ is an isomorphism.
\item[($c_{k}$)] $H^{k+1}(\psi_{k})$ is surjective.
\end{itemize}
\end{proof}
Once again and similarly to the free case (cf. Remark following the proof of Proposition \ref{free case}),
if $(A,d)=(R, 0)$, we get a quasi-isomorphism $\Phi: (\Lambda V,d)\stackrel{\simeq}{\rightarrow} (B,d)$ whose source is a  cdga (compare with \cite[\S 7]{Ha}).
If $R$ is a field, the differential $d$ is decomposable in the sense that $d: V\rightarrow \Lambda ^{\geq 2}V$ (see(\ref{indecomposability}) below). If $R$ is not a field, we still have a sort of minimality in the sense of \cite[Theorem 7.1 (ii)]{Ha}, that is, for each $i$, there is a non-invertible $r_i\in R$ such that $d: V^i \rightarrow r_iV^{i+1}$.  In both cases,  $(\Lambda V,d)$ is a {\it minimal} cdga which  
we call  a {\it  commutative minimal model} of $(B,d)$. The inclusion $\iota : (A,d) \hookrightarrow (A\otimes \Lambda V,d)$ will be called a {\it minimal commutative  model} of $f$.

Following the proof of Proposition \ref{rel-free case}, we get (as in the previous case)
\begin{prop}\label{lift}
Given a diagram of $R$-cdga's
$$\begin{tikzcd}
 & (A,d_A)\arrow[d,"\eta","\simeq"']\\
 (\Lambda V,d)\arrow[r,"\psi"'] & (B,d_B)
\end{tikzcd}$$
such that:
\begin{itemize}
\item[(i)] $(\Lambda V,d)$ is a minimal cdga.
\item[(ii)] $\eta$ is a surjective quasi-isomorphism.
\end{itemize}
Then there exists a morphism $\varphi :(\Lambda V,d)\rightarrow (A,d_A)$ such that $\eta\circ\varphi =\psi$.
\end{prop}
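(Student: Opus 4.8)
The plan is to transcribe, almost word for word, the argument used for Proposition~\ref{rel-free case}, replacing the free product $A\sqcup T(-)$ and the tensor algebra by the tensor product and the symmetric algebra $\Lambda(-)$. Recall that the minimal cdga $(\Lambda V,d)$ comes equipped with the increasing filtration $V=\bigcup_{k}V(k)$ built exactly as in the remark following Proposition~\ref{free case} (its commutative incarnation following from the proof of Proposition~\ref{commodel}), and that this filtration has the crucial feature $d\bigl(V(k+1)\bigr)\subseteq\Lambda\bigl(V(k)\bigr)$ for every $k$; in particular $d$ never re-enters the generators introduced at the current stage. I would build $\varphi$ stage by stage along this filtration.

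The engine is the following elementary lifting property of a \emph{surjective} quasi-isomorphism $\eta:(A,d_A)\to(B,d_B)$: if $z\in A$ is a cocycle and $b\in B$ satisfies $\eta(z)=d_Bb$, then there exists $a\in A$ with $d_Aa=z$ and $\eta(a)=b$. Indeed, choose $\tilde b\in A$ with $\eta(\tilde b)=b$ (possible since $\eta$ is onto); then $z-d_A\tilde b$ is a cocycle of $A$ lying in $\ker\eta$, and $\ker\eta$ is acyclic because the long exact cohomology sequence of $0\to\ker\eta\to A\to B\to 0$ forces $H(\ker\eta)=0$. Hence $z-d_A\tilde b=d_Ak$ for some $k\in\ker\eta$, and $a:=\tilde b+k$ satisfies $d_Aa=z$ and $\eta(a)=b$. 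This step uses nothing about $R$.

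Now the induction itself. Assume $\varphi$ has been defined as a cdga morphism on $\bigl(\Lambda(V(k-1)),d\bigr)$ with $\eta\circ\varphi=\psi$ there, and let $\{v_\alpha\}$ be a homogeneous basis of the generators adjoined in passing from $V(k-1)$ to $V(k)$ (in the notation of Proposition~\ref{free case}, a basis of $V_k^{k-1}\oplus W_k^{k-1}\oplus W_k^{k-2}$). Since $dv_\alpha\in\Lambda(V(k-1))$, the element $\varphi(dv_\alpha)$ is already defined; it is a cocycle because $d_A\varphi(dv_\alpha)=\varphi(d^2v_\alpha)=0$, and it satisfies $\eta\bigl(\varphi(dv_\alpha)\bigr)=\psi(dv_\alpha)=d_B\psi(v_\alpha)$. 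Applying the lifting property with $z=\varphi(dv_\alpha)$ and $b=\psi(v_\alpha)$ produces $a_\alpha\in A$ with $d_Aa_\alpha=\varphi(dv_\alpha)$ and $\eta(a_\alpha)=\psi(v_\alpha)$; I set $\varphi(v_\alpha)=a_\alpha$ and extend multiplicatively. Since compatibility with the differential and the relation $\eta\circ\varphi=\psi$ both hold on the new generators, they hold on all of $\bigl(\Lambda(V(k)),d\bigr)$. Running the same sub-step once more for the generators adjoined from $V(k)$ to $V(k+1)$ (a basis of $V_k^{k-2}\oplus V_k^{k+1}$, where now $z=0$ for the closed generators of $V_k^{k+1}$) extends $\varphi$ to $\Lambda(V(k+1))$; passing to the union over $k$ yields the desired $\varphi:(\Lambda V,d)\to(A,d_A)$ with $\eta\circ\varphi=\psi$.

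The one genuinely delicate point — the one I would verify most carefully — is that the filtration $V(k)$ of the \emph{commutative} minimal model does satisfy $d(V(k+1))\subseteq\Lambda(V(k))$, since this is exactly what makes $\varphi(dv_\alpha)$ meaningful at every stage. Here the minimality of $(\Lambda V,d)$ is essential, but note that one does \emph{not} need $d$ to be decomposable (in the non-field case it need not be, owing to the linear terms $r_iV^{i+1}$ permitted by Halperin's notion of minimality), only that it strictly decreases the filtration degree. Everything else is the faithful commutative shadow of the computations already carried out for Proposition~\ref{rel-free case}.
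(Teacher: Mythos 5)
Your proposal is correct and follows essentially the same route as the paper: an induction along the filtration $V(k)$ of the minimal model, extending $\varphi$ on each new batch of generators by lifting $\varphi(dv_\alpha)$ and $\psi(v_\alpha)$ through the surjective quasi-isomorphism $\eta$. The only difference is that you spell out the elementary lifting step (via acyclicity of $\ker\eta$) that the paper merely asserts, which is a welcome addition but not a change of method.
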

\begin{cor}
\begin{enumerate}
\item Let $(A,d)$ be  a cdga  in  $\text{CDGA}_r(R)$ such that 
$H^*(A,d)$ satisfies  (\ref{finitegenfree}). Then,  there is, up to a quasi-isomorphism, a unique minimal commutative model for  $(A,d)$.
\item Let $f: (A,d)\rightarrow (B,d)$ be  a cdga   in  $CDGA_r(R)$ such that  $H^*(B,d)$ satisfies  (\ref{finitegenfree}).
Then, there is, up to a quasi-isomorphism, a unique minimal commutative model for  $f$.
\end{enumerate} 
\end{cor}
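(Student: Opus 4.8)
\emph{Sketch of proof.} The plan is to obtain both parts from the two preceding results: Proposition~\ref{commodel} supplies a commutative model, Proposition~\ref{lift} supplies comparison morphisms, and \cite[Theorem~7.1, Proposition~7.7 and Remark~7.8]{Ha} upgrade ``there is a comparison morphism'' to ``the model is unique up to quasi-isomorphism'' — this is exactly the ``Corollary~1'' advertised in the introduction as the combination of Propositions~$2.3$ and $2.4$ with Halperin's minimality results. For existence in (1), I would apply Proposition~\ref{commodel} to the unit morphism $(R,0)\to(A,d)$: since $(A,d)\in\text{CDGA}_r(R)$ has $H^*(A,d)$ concentrated in degrees $r+1,\dots,r\rho(R)$ with $r\ge 1$, hypotheses (i)--(ii) hold by \eqref{finitegenfree} and the finiteness built into it, (iii) is precisely the $R$-freeness of $H^{r+1}(A,d)$ demanded in \eqref{finitegenfree} (note $H^2(A,d)$ is this module when $r=1$ and vanishes when $r\ge 2$), (iv) is trivial for $(R,0)$, and (v) is vacuous because $H^2(R,0)=0$. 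This yields a quasi-isomorphism $\Phi\colon(\Lambda V,d)\xrightarrow{\ \simeq\ }(A,d)$ with $V$ of finite type, $V^{\le r}=0$, and $d$ minimal in the sense of \cite[Theorem~7.1(ii)]{Ha}, i.e.\ the minimal commutative model.

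For existence in (2), apply Proposition~\ref{commodel} directly to $f\colon(A,d)\to(B,d)$: hypotheses (i)--(iii) now come from \eqref{finitegenfree} applied to $H^*(B,d)$; (iv) holds because in the situations of interest $A$ is $R$-free (for instance $A=(\Lambda V)^{\otimes n}$ with each $V^i$ $R$-free on a finite basis); and (v), the injectivity of $H^2(f)$, is automatic for $r\ge 2$ since both groups vanish, and is to be assumed — as in Proposition~\ref{commodel}(v) — for $r=1$. This produces the minimal commutative model $\iota\colon(A,d)\hookrightarrow(A\otimes\Lambda V,d)$ of $f$.

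For uniqueness, let $\Phi\colon(\Lambda V,d)\xrightarrow{\ \simeq\ }(A,d)$ and $\Phi'\colon(\Lambda V',d')\xrightarrow{\ \simeq\ }(A,d)$ be two minimal commutative models. I would first replace $\Phi'$ by a \emph{surjective} quasi-isomorphism $\rho\colon(\Lambda V'\otimes\Lambda U,D)\xrightarrow{\ \simeq\ }(A,d)$, obtained by adjoining a contractible free cdga $(\Lambda U,D)$ chosen to hit the missing algebra generators of $A$, so that $(\Lambda V',d')\hookrightarrow(\Lambda V'\otimes\Lambda U,D)$ is again a quasi-isomorphism. Proposition~\ref{lift} then lifts $\Phi$ through $\rho$ to a morphism $(\Lambda V,d)\to(\Lambda V'\otimes\Lambda U,D)$, which is a quasi-isomorphism because $\Phi$ and $\rho$ are; comparing it with the inclusion of $(\Lambda V',d')$ and invoking that a quasi-isomorphism between minimal cdga's over $R$ is an isomorphism (\cite[Theorem~7.1]{Ha}, with \cite[Proposition~7.7 and Remark~7.8]{Ha} for the bookkeeping) yields an isomorphism $(\Lambda V,d)\cong(\Lambda V',d')$ over $(A,d)$. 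The same scheme, with Proposition~\ref{lift} in its evident relative form (its proof carries over verbatim to free extensions of $(A,d)$) and minimality taken over $A$, gives the uniqueness assertion in (2).

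The existence half is essentially a direct application of Proposition~\ref{commodel}, so the real difficulty lies in uniqueness, where I anticipate two pinch points. The first is producing the surjective quasi-isomorphism onto $(A,d)$ (resp.\ $(B,d)$) needed to invoke Proposition~\ref{lift}: over a principal ideal domain one must check that the adjoined contractible algebra can be chosen $R$-free and acyclic in all degrees, and that the homotopy relation on cdga morphisms over $R$ behaves as in the rational case. The second, and the more delicate, is the assertion that a quasi-isomorphism between \emph{minimal} cdga's over a PID is an isomorphism: unlike the field case this must accommodate the torsion-scaled differentials $d\colon V^i\to r_iV^{i+1}$ permitted by \cite[Theorem~7.1(ii)]{Ha}, so one has to pass to a suitable ``indecomposables modulo torsion'' quotient where $d$ is forced to vanish and argue that $\Phi$, being a quasi-isomorphism, induces a bijection on generators. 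Both points are precisely the substance of \cite[\S 7]{Ha}, which is why the corollary is phrased as a combination of Propositions~$2.3$ and $2.4$ with those results.
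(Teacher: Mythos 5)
Your proposal is correct and follows essentially the same route as the paper: existence from Proposition~\ref{commodel} (applied to $(R,0)\to(A,d)$, resp.\ to $f$) and uniqueness by combining the lifting property of Proposition~\ref{lift} with \cite[Proposition~7.7 and Remark~7.8]{Ha}. The paper's own proof is a one-line citation of exactly these ingredients; your write-up merely makes explicit the verifications (surjectivity needed for the lift, quasi-isomorphisms between minimal cdga's over $R$ being isomorphisms) that the paper delegates to Halperin's \S 7.
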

\begin{proof}
It suffice to use the above Proposition and \cite[Proposition 7.7 and Remark 7.8]{Ha} for two minimal models of $(A,d)$ (resp. $(B,d)$).
\end{proof}
\section{Sectional category}\label{secatComm}

In this section, we introduce the sectional category of a surjective morphism of $R$-cdga's, where $R$ is a principal ideal domain containing $\frac{1}{2}$.

Given an $(r,\rho(R'))$-H-mild algebra $(A,d)$, $(A\otimes A,d)$ can be viewed as an $(r,l)$-H-mild algebra. First recall the Künneth formula for cohomology
$$H^k(A\otimes A)=\bigoplus_{p+q=k} \left(  H^p(A)\otimes H^q(A) \right)  \oplus \bigoplus_{p'+q'=k+1}Tor_1(H^{p'}(A),H^{q'}(A)),$$
and notice that, when $k=2r\rho(R')$, the  term $\bigoplus_{p'+q'=k+1}Tor_1(H^{p'}(A),H^{q'}(A))$ vanishes  and $\bigoplus_{p+q=k} \left(  H^p(A)\otimes H^q(A) \right)$ reduces to two summand, so  that: $$H^{2r\rho(R')}(A\otimes A)=H^{r\rho(R')}(A)\otimes H^{r\rho(R')}(A).$$
Thus the cohomology, greater than $2r\rho(R')$, of $(A\otimes A,d)$ vanishes.  Consequently $(A\otimes A,d)$ is an $(r,2r\rho(R'))$-H-mild algebra. Following the same process we can conclude that, for any $m\geq 2$,  $(A^{\otimes m},d)$ is an $(r,mr\rho(R'))$-H-mild algebra.

Now let $\varphi : A\rightarrow B$ be a surjective morphism of $(r,\rho (R'))$-H-mild cdga's. We then change the principal ideal domain $R'$ to $R$ in a way for the quotient algebra $\left(\frac{{A}^{\otimes m+1}}{\left(\operatorname{ker} {{(\varphi )}}\right)^{\otimes m+1}}, \overline{d} \right)$ to be $(r,\rho(R))$-H-mild and such that $\rho(R)$ satisfies the following:
\begin{equation}\label{rR}
m\rho(R')\leq \rho(R).
\end{equation}
Consequently, the following projection
$$p_{m}:\left({A}^{\otimes m+1}, d\right) \longrightarrow\left(\frac{{A}^{\otimes m+1}}{\left(\operatorname{ker} {{(\varphi)}}\right)^{\otimes m+1}}, \overline{d} \right)$$
becomes a morphism of $(r,\rho(R))$-H-mild cdga's, and applying proposition \ref{commodel} yields the following commutative diagram
$$\begin{tikzcd}
\left(A^{\otimes m+1}, d\right) \arrow[r,"p_{m}"] \arrow[rd,hook,"i_m"']
&  \left(\frac{{A}^{\otimes m+1}}{\left(\operatorname{ker} {{(\varphi)}}\right)^{\otimes m+1}}, \overline{d} \right) \\
  &  (A^{\otimes m+1}\otimes \Lambda W_{(m)},d). \arrow[u,"\simeq"']
\end{tikzcd}$$
Thus, introducing  $\bar{\mu} _{m+1} : A^{\otimes m+1}\otimes \Lambda W_{(m)}\stackrel{\mu_{m+1}\otimes id_{\Lambda W_{(m)}}}{\longrightarrow} A\otimes \Lambda W_{(m)}$,
we then obtain the following diagram:
$$\begin{tikzcd}
\left(A^{\otimes m+1}, d\right) \arrow[r,"i_m",hook] \arrow[d,"\mu_{m+1}"'] & (A^{\otimes m+1}\otimes \Lambda W_{(m)},d) \arrow[d,"\bar{\mu}_{m+1}"] \\
(A,d) \arrow[r,"j_m"',hook]  & (A\otimes \Lambda W_{(m)},\bar{d}).
\end{tikzcd}$$
As already established for the rational case, the following pushout
\begin{equation}\label{secatchara}
\begin{tikzcd}
\left(A^{\otimes m+1}, d\right) \arrow[r,"i_m",hook] \arrow[d,"\mu_{m+1}"'] & (A^{\otimes m+1}\otimes \Lambda W_{(m)},d) \arrow[d,"\bar{\mu}_{m+1}"] \arrow[rdd,"\tau",bend left=15] & \\
(A,d) \arrow[r,"j_m"',hook] \arrow[rrd,"id_A"',bend right=15] & (A\otimes \Lambda W_{(m)},d) \arrow[rd,"r",dashed] & \\	
 & & (A,d)
\end{tikzcd}
\end{equation}
allows the following property: there exists a cdga morphism $\tau$ satisfying $\tau\circ i_m=\mu_{m+1}$ if and only if there is a cdga retraction $r$ for $j_m$. Therefore we have the following
\begin{prop}
Let $\varphi :A\rightarrow B$ be a surjective morphism of algebras, in $\text{CDGA}_r(R)$, whose homology satisfies (\ref{finitegenfree}).  Then, $secat(\varphi)$ (\ref{secat}) is the smallest $m$ for which $j_m$ admits a cdga retraction.

In particular, if $X$ is a finite type $(r,\rho(R))$-mild CW-complex such that $H_*(\Omega X, R)$ is torsion free, and $(\Lambda V,d)\longrightarrow C^*(L)$ is a decomposable minimal commutative model of its  Cartan-Eilenberg-Chevally complex $C^*(L)$, then $TC_n(X,R)=secat(\mu_{\Lambda V,n})$.
\end{prop}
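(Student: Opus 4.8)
The plan is to prove the two assertions in turn. First I would show that the commutative square in (\ref{secatchara}) with edges $i_m$, $\mu_{m+1}$, $\bar\mu_{m+1}$, $j_m$ is a pushout in the category of cdga's over $R$. On underlying graded algebras it is the base change of the free extension $i_m\colon A^{\otimes m+1}\hookrightarrow A^{\otimes m+1}\otimes\Lambda W_{(m)}$ along the surjective cdga morphism $\mu_{m+1}$. Since $\Lambda W_{(m)}$ is $R$-free, $\ker\bar\mu_{m+1}=\ker(\mu_{m+1}\otimes\mathrm{id})$ equals $\ker(\mu_{m+1})\otimes\Lambda W_{(m)}$, which is a differential ideal of $(A^{\otimes m+1}\otimes\Lambda W_{(m)},d)$ because $\ker\mu_{m+1}$ is one in $A^{\otimes m+1}$ and $\Lambda W_{(m)}$ enters only as a free tensor factor; hence the induced differential $\bar d$ on the quotient $A\otimes\Lambda W_{(m)}$ is well defined and square-zero, $\bar\mu_{m+1}$ is a surjective dg-morphism, and the square is cocartesian, with $j_m$ again a relative free extension.

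Granted this, the equivalence asserted right after (\ref{secatchara}) — that a cdga morphism $\tau$ with $\tau\circ i_m=\mu_{m+1}$ exists if and only if $j_m$ admits a cdga retraction $r$ — follows formally from the universal property. Given such a $\tau$, the pair $(\mathrm{id}_A,\tau)$ satisfies the compatibility $\mathrm{id}_A\circ\mu_{m+1}=\tau\circ i_m$, so the pushout property factors it uniquely through a cdga morphism $r\colon(A\otimes\Lambda W_{(m)},\bar d)\to(A,d)$ with $r\circ j_m=\mathrm{id}_A$ and $r\circ\bar\mu_{m+1}=\tau$; thus $r$ is a retraction of $j_m$. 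Conversely, given a retraction $r$ of $j_m$, put $\tau:=r\circ\bar\mu_{m+1}$; commutativity of the square gives $\tau\circ i_m=r\circ(j_m\circ\mu_{m+1})=\mu_{m+1}$. Since the map ``$r_m$'' in the definition (\ref{secat}) of $secat(\varphi)$ is precisely such a $\tau$ (with $\iota_m=i_m$), taking the minimum over $m$ yields $secat(\varphi)=\min\{m\mid j_m\text{ admits a cdga retraction}\}$, which is the first assertion.

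For the ``in particular'' statement I would check that the $n$-fold product $\mu_{\Lambda V,n}\colon(\Lambda V)^{\otimes n}\to\Lambda V$ meets the hypotheses of the proposition, so that the argument above applies to it verbatim. It is a cdga morphism (as $\Lambda V$ is commutative and $\mu_{\Lambda V,n}$ obeys the Leibniz rule) and surjective, since $\mu_{\Lambda V,n}(a\otimes1\otimes\cdots\otimes1)=a$. The target $\Lambda V$ is $(r,\rho(R'))$-H-mild, hence $(r,\rho(R))$-H-mild; by the K\"unneth computation at the start of this section $(\Lambda V)^{\otimes n}$ is $(r,nr\rho(R'))$-H-mild, hence $(r,\rho(R))$-H-mild by the choice $n\rho(R')\le\rho(R)$ of (\ref{rR}); so $\mu_{\Lambda V,n}$ lies in $\text{CDGA}_r(R)$. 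Finally $H^*(\Lambda V,d)\cong H^*(X;R)$ satisfies (\ref{finitegenfree}): $X$ is $r$-connected of finite type, so $H^0=R$ and $H^{1\le i\le r}=0$, while $H^{r+1}(X;R)$ is $R$-free because $H_*(\Omega X;R)$ is $R$-torsion free (hence free of finite type over the PID $R$) and $H_{r+1}(X;R)\cong H_r(\Omega X;R)$ for $r$-connected $X$, by the universal coefficient theorem. Thus the retraction characterization applies to $\mu_{\Lambda V,n}$, and since $TC_n(X,R)$ is by definition $secat(\mu_{\Lambda V,n})$, we obtain $TC_n(X,R)=secat(\mu_{\Lambda V,n})$.

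The hard part is the first step. One must be sure that the relative-free-extension and pushout bookkeeping of rational homotopy theory really does survive the passage from a field to a principal ideal domain containing $\frac{1}{2}$ — that $\bar d$ is well defined and square-zero, that $j_m$ stays a free extension, and, upstream of this, that the quotient algebra $A^{\otimes m+1}/(\ker\varphi)^{\otimes m+1}$ is genuinely $(r,\rho(R))$-H-mild (with the freeness and $H^2$-injectivity hypotheses of Proposition \ref{commodel} in force) after the coefficient change from $R'$ to $R$ of (\ref{rR}), so that Proposition \ref{commodel} can be legitimately invoked to build the models $(A^{\otimes m+1}\otimes\Lambda W_{(m)},d)$. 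Once that is secured, the universal-property argument and the verification for $\mu_{\Lambda V,n}$ are routine.
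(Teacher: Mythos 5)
Your proposal is correct and follows essentially the same route as the paper, which derives the retraction characterization from the universal property of the pushout square (\ref{secatchara}) and then obtains the ``in particular'' clause directly from the definition $TC_n(X,R):=secat(\mu_{\Lambda V,n})$. You merely make explicit the verifications (that the square is cocartesian over a PID, and that $\mu_{\Lambda V,n}$ meets the hypotheses) which the paper leaves implicit by appeal to the rational case.
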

This involves the following
\begin{definition}\label{secat}
Let $\varphi :A\rightarrow B$ be a surjective morphism of algebras, in $\text{CDGA}_r(R)$, whose homology satisfies (\ref{finitegenfree}).
\begin{itemize} 
\item[(i)] The module sectional category, $msecat(\varphi)$, of $\varphi$ is the smallest $m$ such that $j_m$
admits an $A$-module retraction.
\item[(ii)] The homology sectional category, $Hsecat(\varphi)$, of $\varphi$ is the smallest $m$ such that $H(j_m)$ is injective.
\end{itemize}
\end{definition}

The inequality $$nil \ker{(H(\varphi))} \leq Hsecat(\varphi),$$ still holds exactly as in \cite[Proposition 6]{JCar}. Here, $nil \ker{(H(\varphi))}$ is the longest non trivial product of elements of $\ker{(H(\varphi))}$.
Consequently we have (in our context):
 $$nil \ker{(H(\varphi))} \leq Hsecat(\varphi) \leq msecat(\varphi) \leq secat(\varphi).$$

 Similarly, if  $X$ satisfies the hypothesis of the above proposition, we get 
 the following
\begin{definition}
\begin{enumerate}
 \item[(i)] $mTC_n(X,R)$ is defined as $msecat(\mu_{\Lambda V,n})$.
 \item[(ii)] $HTC_n(X,R)$ is defined as $Hsecat(\mu_{\Lambda V,n})$.
 \end{enumerate}
\end{definition}
This translates the above inequalities to topological ones as follows:
$$nil \ker(H(\mu_{\Lambda V,n}))\leq HTC_n(X,R)\leq mTC_n(X,R)\leq TC_n(X, R)$$
where $nil \ker(H(\mu_{\Lambda V,n}))$ is the longest non trivial product of elements of $\ker(H(\mu_{\Lambda V,n}))$.

\section{$sc$ invariant and topological complexity}

In this section, inspired by the study of the (non-homotopy) invariant $\operatorname{sc}$ in \cite{JCar}, we construct an analogous definition for a surjective morphism of $(r,\rho(R))$-H-mild cdga's in terms of its commutative model over a principal ideal domain $R$.

Let $\varphi :(A,d)\rightarrow (B,d)$ be a surjective morphism of $(r,\rho(R'))$-H-mild cdga's, we transform the principal ideal domain to $R$ satisfying (\ref{rR}), that is, such that the following projection
$$\Gamma_{m} :(A,d)\rightarrow \left( \frac{A}{(\ker (\varphi))^{m+1}},\overline{d}\right) $$
becomes a morphism of $(r,\rho(R))$-H-mild cdga's. Therefore, applying Proposition \ref{commodel}, we have for each integer $m$ a commutative model of $\Gamma_{m}$:
$$\begin{tikzcd}
\left(A, d\right) \arrow[r,"\Gamma_{m}"] \arrow[rd,hook,"\iota_m"']
&  \left(\frac{A}{\left(\operatorname{ker} {{(\varphi)}}\right)^{m+1}}, \overline{d} \right) \\
  &  (A\otimes \Lambda V_{(m)},d), \arrow[u,"\simeq"']
\end{tikzcd}$$
allowing the following
\begin{definition}
Let $\varphi :(A,d)\rightarrow (B,d)$ be a surjective morphism of algebras, in $\text{CDGA}_r(R)$, whose homology satisfies (\ref{finitegenfree}) and consider the previous diagram:
\begin{itemize}
\item[(i)] $sc(\varphi)$ is the smallest integer $m$ such that $\Gamma_{m}$ admits a homotopy retraction.
\item[(ii)] $msc(\varphi)$ is the smallest $m$ such that $\Gamma_{m}$ admits a homotopy retraction as $A$-modules.
\item[(iii)] $Hsc(\varphi)$ is the smallest $m$ such that $H(\Gamma_{m})$ is injective.
\end{itemize}
\end{definition}

\begin{theo}\label{upbound}
Let $\varphi$ be any surjective morphism of $(r,\rho(R))$-H-mild whose homology satisfies (\ref{finitegenfree}), we have
\begin{itemize}
\item[(i)] $secat(\varphi)\leq sc(\varphi)$,
\item[(ii)] $msecat(\varphi)\leq msc(\varphi)$,
\item[(iii)] $Hsecat(\varphi)\leq Hsc(\varphi)$.
\end{itemize}
\end{theo}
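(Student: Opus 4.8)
My plan is to derive all three inequalities from a single construction: for each integer $m$, the $(m+1)$-fold product $\mu_{m+1}$ yields a comparison cdga morphism from the commutative model of the projection $p_m$ (the one underlying $secat$) to that of the projection $\Gamma_m$ (the one underlying $sc$); composing a level-$m$ solution of the $sc$-problem with this morphism, and then base-changing along $\mu_{m+1}$, produces a level-$m$ solution of the $secat$-problem --- uniformly in the cdga, module, and homology versions.

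First I would set up the square. Fix $m$, put $K=\ker(\varphi)$, and note that $K^{\otimes m+1}$ is a differential ideal of $A^{\otimes m+1}$ with $\mu_{m+1}(K^{\otimes m+1})=K^{m+1}$, so $\mu_{m+1}$ descends to a cdga morphism $\bar\mu_{m+1}\colon A^{\otimes m+1}/K^{\otimes m+1}\to A/K^{m+1}$; together with $p_m$, $\Gamma_m$ and $\mu_{m+1}$ this gives a commutative square of cdga's. Write $\theta_W\colon(A^{\otimes m+1}\otimes\Lambda W_{(m)},d)\xrightarrow{\simeq}A^{\otimes m+1}/K^{\otimes m+1}$ and $\theta_V\colon(A\otimes\Lambda V_{(m)},d)\xrightarrow{\simeq}A/K^{m+1}$ for the quasi-isomorphisms of Proposition \ref{commodel}, with $\theta_W\circ i_m=p_m$ and $\theta_V\circ\iota_m=\Gamma_m$; since $p_m$ and $\Gamma_m$ are surjective, so are $\theta_W$ and $\theta_V$, hence $\ker\theta_V$ is acyclic. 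Next I would build the comparison morphism by extending $\iota_m\circ\mu_{m+1}\colon A^{\otimes m+1}\to A\otimes\Lambda V_{(m)}$ over the generators of $\Lambda W_{(m)}$ one stage at a time --- this is the relative form of the lifting argument of Proposition \ref{lift}, legitimate because $\theta_V$ is a surjective quasi-isomorphism --- maintaining the relation $\theta_V\circ\Theta=\bar\mu_{m+1}\circ\theta_W$. This produces a cdga morphism
$$\Theta\colon(A^{\otimes m+1}\otimes\Lambda W_{(m)},d)\longrightarrow(A\otimes\Lambda V_{(m)},d)\quad\text{with}\quad\Theta\circ i_m=\iota_m\circ\mu_{m+1}.$$
Since $(A\otimes\Lambda W_{(m)},\bar d)$ in diagram (\ref{secatchara}) is the pushout of $i_m$ along $\mu_{m+1}$, the pair $(\Theta,\iota_m)$, which agree on $A^{\otimes m+1}$, induces a cdga morphism $G\colon(A\otimes\Lambda W_{(m)},\bar d)\to(A\otimes\Lambda V_{(m)},d)$ with $G\circ j_m=\iota_m$.

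With $\Theta$ and $G$ in place the three inequalities are formal. For (i): if $\iota_m$ has a cdga retraction $k_m$, then $\tau:=k_m\circ\Theta$ is a cdga morphism over $\mu_{m+1}$, i.e. $\tau\circ i_m=\mu_{m+1}$, and base change of $\tau$ along $\mu_{m+1}$ --- equivalently the universal property of the pushout (\ref{secatchara}) applied to $(\tau,\mathrm{id}_A)$ --- yields a cdga retraction $\rho$ of $j_m$; by the characterization of $secat(\varphi)$ recalled in Section \ref{secatComm}, this gives $secat(\varphi)\le m$. For (ii): if $k_m$ is merely an $A$-module retraction of $\iota_m$, the same $\tau=k_m\circ\Theta$ is an $A^{\otimes m+1}$-linear chain map with $\tau\circ i_m=\mu_{m+1}$, and its base change along $\mu_{m+1}$ is an $A$-module chain retraction of $j_m$, so $msecat(\varphi)\le m$. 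For (iii): if $H(\Gamma_m)$ is injective, then so is $H(\iota_m)=H(\theta_V)^{-1}\circ H(\Gamma_m)$, and from $G\circ j_m=\iota_m$ we get $H(G)\circ H(j_m)=H(\iota_m)$, forcing $H(j_m)$ injective, i.e. $Hsecat(\varphi)\le m$. Applying these with $m$ equal to $sc(\varphi)$, $msc(\varphi)$, $Hsc(\varphi)$ respectively yields the three inequalities.

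The step I expect to be the main obstacle is the construction of $\Theta$ compatibly with the structural inclusions $i_m$ and $\iota_m$, that is, the functoriality of relative commutative models over a principal ideal domain: it does not follow from Proposition \ref{commodel} alone and rests on the surjective-quasi-isomorphism lifting of Proposition \ref{lift}. The feature that makes it go through here is that $\theta_W$ and $\theta_V$ are automatically surjective, being extensions of the surjections $p_m$ and $\Gamma_m$, so that $\ker\theta_V$ is acyclic and the generators of $\Lambda W_{(m)}$ can be lifted one at a time. Everything after that --- transporting retractions and injectivity through the pushout (\ref{secatchara}) --- is routine and identical in the cdga, module, and homology settings.
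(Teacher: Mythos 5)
Your proposal is correct and follows essentially the same route as the paper: the paper also constructs the comparison morphism (its $\theta$, your $\Theta$) by lifting against the surjective quasi-isomorphism onto $A/(\ker\varphi)^{m+1}$ via Proposition \ref{lift}, composes it with the retraction of $\iota_m$ to get $\tau$ with $\tau\circ i_m=\mu_{m+1}$, and concludes through the pushout equivalence of diagram (\ref{secatchara}). Your write-up is in fact more complete, since the paper only spells out case (i) and leaves the module and homology variants implicit.
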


\begin{proof}
Let $(A\otimes \Lambda V_{(m)},d) \stackrel{\simeq}{\longrightarrow} \left( \frac{A}{\ker{(\varphi)}} ,\overline{d} \right) $ be  a commutative model for $\Gamma_m$. The multiplication map $\mu_{m+1}:A^{\otimes m+1}\rightarrow A$ induces the following map
$$\overline{\mu}_{m+1}: \frac{A^{\otimes m+1}}{({\ker(\varphi)})^{\otimes m+1}}\rightarrow \frac{A}{(\ker(\varphi))^{m+1}} ,$$
fitting in the following commutative diagram
$$\begin{tikzcd}
A^{\otimes m+1}\arrow[r,"\mu_{m+1}"]\arrow[d,"i_m"',hook] & A \arrow[r,"\iota_m",hook] & A\otimes \Lambda V_{(m)} \arrow[d,"\simeq"]\\
A^{\otimes m+1}\otimes \Lambda W_{(m)}\arrow[r,"\simeq"']\arrow[rru,"\theta",dashed] & \frac{A^{\otimes m+1}}{({\ker(\varphi)})^{\otimes m+1}}\arrow[r,"\overline{\mu}_{m+1}"'] & \frac{A}{(\ker(\varphi))^{m+1}},
\end{tikzcd}$$
where $\theta$ is obtained by the lifting lemma (Proposition \ref{lift}).

Now if $r:(A\otimes \Lambda V_{(m)},d)\rightarrow (A,d)$ is a homotopy retraction for $\Gamma_m$, then put $\tau=r\circ\theta$ and clearly we have $\tau\circ i_m=\mu_{m+1}$, and this is equivalent to $j_m$ having an algebra retraction (\ref{secatchara}).
\end{proof}

\begin{cor}
Let $\varphi:A\rightarrow B$ be a surjective morphism of algebras, in $\text{CDGA}_r(R)$, whose homology satisfies (\ref{finitegenfree}), then
$$secat(\varphi) \leq nil \ker(\varphi).$$
\end{cor}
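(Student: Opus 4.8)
The plan is to show that $sc(\varphi)\le nil\,\ker(\varphi)$ and then invoke Theorem \ref{upbound}(i). Write $k=nil\,\ker(\varphi)$, so that $(\ker(\varphi))^{k+1}=0$ in $A$. By the very definition of $sc$, it suffices to exhibit, for $m=k$, a homotopy retraction of the commutative model $\iota_m:(A,d)\hookrightarrow(A\otimes\Lambda V_{(m)},d)$ of the projection $\Gamma_m:(A,d)\to(A/(\ker(\varphi))^{m+1},\overline d)$. But when $m=k$ the ideal $(\ker(\varphi))^{m+1}$ is the zero ideal, so $\Gamma_k$ is simply the identity map $id_A:(A,d)\to(A,d)$ (after identifying $A/0$ with $A$), which is in particular a quasi-isomorphism.

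The key step is then the following: since $\iota_k$ and the identity are both cdga models of the same quasi-isomorphism $\Gamma_k=id_A$, the quasi-isomorphism $\psi:(A\otimes\Lambda V_{(k)},d)\stackrel{\simeq}{\to}(A,d)$ in the factorization must itself be a quasi-isomorphism with $\psi\circ\iota_k=id_A$. I would argue this directly from the construction in Proposition \ref{commodel}: the model of an identity (indeed of any quasi-isomorphism) can be taken with $V_{(k)}$ contractible, so $\psi$ admits a section up to the needed data; more robustly, one applies the lifting lemma (Proposition \ref{lift}) to the diagram consisting of $\psi:(A\otimes\Lambda V_{(k)},d)\to(A,d)$ (a surjective quasi-isomorphism, surjectivity coming from $\psi\circ\iota_k=id_A$) against the identity $(A,d)\to(A,d)$ — wait, that gives a lift the wrong way; instead one uses that $\iota_k$ is a relative minimal model of a quasi-isomorphism, hence $\psi$ is itself a quasi-isomorphism and $\iota_k$ is a quasi-isomorphism, so $\iota_k$ has a homotopy inverse, and any homotopy inverse composed appropriately yields a strict retraction. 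Concretely: $\psi$ itself already satisfies $\psi\circ\iota_k=id_A$ by commutativity of the factorization triangle, so $r:=\psi$ is the desired homotopy retraction (a cdga morphism, in particular a homotopy retraction). Hence $sc(\varphi)\le k=nil\,\ker(\varphi)$.

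Combining this with Theorem \ref{upbound}(i), $secat(\varphi)\le sc(\varphi)\le nil\,\ker(\varphi)$, which is the assertion.

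The main point to be careful about — the only real obstacle — is the identification $A/(\ker\varphi)^{k+1}\cong A$ at the level of cdga's and the claim that its commutative model can be taken to be $\iota_k=id_A$ with $\psi=id_A$; one must check this is compatible with the uniqueness statement of Corollary 1(2), i.e. that $(A,d)\xrightarrow{id}(A,d)$ is a legitimate commutative model of $\Gamma_k$ and hence the general $(A\otimes\Lambda V_{(k)},d)$ is quasi-isomorphic to it via a map splitting $\iota_k$. Since $\Gamma_k=id_A$ is already a surjective quasi-isomorphism, $(A\otimes\Lambda V_{(k)},d)\xrightarrow{\psi}(A,d)$ together with $\iota_k$ is precisely such a factorization, and the triangle identity $\psi\circ\iota_k=id_A$ gives the retraction for free; no appeal to minimality or uniqueness is actually needed. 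This makes the argument short once the bookkeeping $m=k\Rightarrow(\ker\varphi)^{m+1}=0$ is in place.
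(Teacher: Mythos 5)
Your proof is correct, and it follows essentially the route the paper intends: combine Theorem \ref{upbound}(i) with the bound $sc(\varphi)\le nil\ker(\varphi)$. The paper itself obtains the latter inequality indirectly, in Section 5, by proving $sc(\varphi)\le Hnil\ker(\varphi)\le nil\ker(\varphi)$: there one takes an acyclic ideal $J\supset(\ker\varphi)^{m+1}$, uses that $A\to A/J$ is a quasi-isomorphism, and invokes the lifting lemma (Proposition \ref{lift}) to produce the retraction $r$. Your argument is exactly the special case $J=0$: when $m=nil\ker(\varphi)$ the quotient $A/(\ker\varphi)^{m+1}$ is $A$ itself, $\Gamma_m=id_A$, and the triangle identity $\psi\circ\iota_m=\Gamma_m=id_A$ from Proposition \ref{commodel} already exhibits $\psi$ as the required cdga retraction, so no lifting lemma or minimality is needed. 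This is a legitimate (and slightly cleaner) shortcut; the only cost is that it does not also yield the sharper intermediate bound $sc(\varphi)\le Hnil\ker(\varphi)$ that the paper records later. Your final paragraph correctly discards the unnecessary detour through uniqueness of models; the first, hesitant attempt via the lifting lemma in your middle paragraph can simply be deleted.
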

As an application of this definition, we introduce, for $n\geq 2$, algebraic topological complexities $\operatorname{tc_n}$, $\operatorname{mtc_n}$ and $\operatorname{Htc_n}$ over a principal ideal domain. For this purpose, we consider $X$ to be a finite type $(r,\rho(R))$-mild CW-complex such that $H_*(\Omega X, R)$ is torsion free and let 
$$(\Lambda V,d)\longrightarrow C^*(L),$$
be its minimal commutative model.

Let $\mu_{\Lambda V,n} : (\Lambda V)^{\otimes n}\rightarrow \Lambda V$ denote the $n$-fold product of $\Lambda V$,
same as before, making the necessary assumption on $\rho(R)$,
the commutative diagram follows:
$$\begin{tikzcd}
\left((\Lambda V)^{\otimes n}, d\right) \arrow[r,"\Gamma_{m }"] \arrow[rd,hook,"\iota_m"']
&  \left(\frac{(\Lambda V)^{\otimes n}}{\left(\operatorname{ker} {{(\mu_{\Lambda V,n})}}\right)^{m+1}}, \overline{d} \right)  \\
  &  ((\Lambda V)^{\otimes n}\otimes \Lambda V_{(m)},d), \arrow[u,"\simeq"']
\end{tikzcd}$$
and we have the following
 \begin{definition}\label{TC-R}
 \begin{enumerate}
 \item[(i)] $tc_n(X,R)$ is defined as $sc(\mu_{\Lambda V,n})$.
 \item[(ii)] $mtc_n(X,R)$ is defined as $msc(\mu_{\Lambda V,n})$.
 \item[(iii)] $Htc_n(X,R)$ is defined as $Hsc(\mu_{\Lambda V,n})$.
 \end{enumerate}
 \end{definition}
 
Another corollary of theorem \ref{upbound} is the following

\begin{cor}
Let $X$ be a CW-complex satisfying the above hypotheses, we have
\begin{itemize}
\item[(i)] $TC_n(X,R)\leq tc_n(X,R)$,
\item[(ii)] $mTC_n(X,R)\leq mtc_n(X,R)$,
\item[(iii)] $HTC_n(X,R)\leq Htc_n(X,R)$.
\end{itemize}
\end{cor}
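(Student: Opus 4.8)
The statement to prove is the triple inequality $TC_n(X,R)\leq tc_n(X,R)$, $mTC_n(X,R)\leq mtc_n(X,R)$ and $HTC_n(X,R)\leq Htc_n(X,R)$ for a finite-type $(r,\rho(R))$-mild CW-complex $X$ with $H_*(\Omega X, R)$ torsion free. The plan is to observe that all three of these are instances of the general inequalities established in Theorem \ref{upbound}, applied to one particular surjective morphism, namely the $n$-fold product $\mu_{\Lambda V,n}:(\Lambda V)^{\otimes n}\rightarrow \Lambda V$ of the minimal commutative model of $X$. So the proof is essentially a bookkeeping reduction, and the only thing that needs checking is that $\mu_{\Lambda V,n}$ legitimately falls under the hypotheses of Theorem \ref{upbound}.

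\textbf{Step 1: identify the relevant morphism and verify its hypotheses.} First I would recall that, by Corollary 1 (the uniqueness statement), the minimal commutative model $(\Lambda V,d)\rightarrow C^*(L)$ exists and is well defined up to quasi-isomorphism, so $TC_n(X,R)$, $tc_n(X,R)$ and their module/homology variants are well defined via Definitions \ref{TC-R} and the preceding ones. The morphism $\mu_{\Lambda V,n}$ is surjective (it is split by the inclusion of any tensor factor, or simply because $1\mapsto 1$). Next I would check that $(\Lambda V)^{\otimes n}$ and $\Lambda V$ lie in $\text{CDGA}_r(R)$ after the change of principal ideal domain from $R'$ to $R$ with $n\rho(R')\leq\rho(R)$: this is exactly the computation carried out at the start of Section \ref{secatComm}, where it is shown that $(A^{\otimes m},d)$ is $(r, mr\rho(R'))$-H-mild, specialized here to $A=\Lambda V$ and $m=n$. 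Finally, $H^*(\Lambda V)=H^*(X;R)$ satisfies (\ref{finitegenfree}) because $X$ is $r$-connected of finite type and $H^{r+1}(X;R)$ is $R$-free under our running torsion-freeness assumption (this is what makes the minimal model exist in the first place, cf. Corollary 1). Hence $\mu_{\Lambda V,n}$ is a surjective morphism in $\text{CDGA}_r(R)$ whose homology satisfies (\ref{finitegenfree}), so Theorem \ref{upbound} applies verbatim.

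\textbf{Step 2: match the invariants and conclude.} Then it remains only to unwind the definitions. By Definition \ref{TC-R}, $tc_n(X,R)=sc(\mu_{\Lambda V,n})$, $mtc_n(X,R)=msc(\mu_{\Lambda V,n})$ and $Htc_n(X,R)=Hsc(\mu_{\Lambda V,n})$; and on the other side $TC_n(X,R)=secat(\mu_{\Lambda V,n})$ (this is the content of the Proposition in Section \ref{secatComm}), with $mTC_n(X,R)=msecat(\mu_{\Lambda V,n})$ and $HTC_n(X,R)=Hsecat(\mu_{\Lambda V,n})$ by the Definition following it. Substituting these identifications into the three inequalities of Theorem \ref{upbound} with $\varphi=\mu_{\Lambda V,n}$ yields the three asserted inequalities immediately. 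I do not anticipate a genuine obstacle here; the only subtle point — and the thing I would state carefully rather than gloss over — is confirming that the single choice of enlarged coefficient ring $R$ (with $n\rho(R')\leq\rho(R)$) used to define $TC_n$, $mTC_n$, $HTC_n$ is the \emph{same} one used to define $tc_n$, $mtc_n$, $Htc_n$, so that the commutative models $(A^{\otimes m+1}\otimes\Lambda W_{(m)},d)$ appearing in $secat$ and the $(A\otimes\Lambda V_{(m)},d)$ appearing in $sc$ are built over a common ground ring; this is exactly the hypothesis under which Theorem \ref{upbound} was proved, via the lifting map $\theta$ of Proposition \ref{lift}, so no new argument is needed.
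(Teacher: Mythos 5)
Your proposal is correct and matches the paper's (implicit) argument: the paper states this corollary without proof as an immediate consequence of Theorem \ref{upbound} applied to $\varphi=\mu_{\Lambda V,n}$, which is exactly your reduction. Your extra care in verifying that $\mu_{\Lambda V,n}$ satisfies the hypotheses of the theorem and that the same enlarged coefficient ring is used on both sides is a welcome elaboration but not a different route.
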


\section{Homology nilpotency}
Let $I$ be an ideal of a cdga $A$, the homology nilpotency of $I$ is
$$Hnil I:=min \left\lbrace k: I^{k+1}\subset J, J \text{ is an acyclic ideal of } A \right\rbrace .$$
Let $\varphi:A\rightarrow B$ be a surjective morphism of algebras, in $\text{CDGA}_r(R)$, whose homology satisfies (\ref{finitegenfree}), and suppose that $(\ker(\varphi))^{m+1}$ is included in some acyclic ideal $J$ of $A$, which allows the commutative diagram
$$\begin{tikzcd}
A\arrow[r,"\Gamma_m"]\arrow[rd,"\simeq"'] & \frac{A}{(\ker(\varphi))^{m+1}}\arrow[d]\\
 & A/J,
\end{tikzcd}$$
we combine it with the commutative model yields the morphism $r$ via the lifting lemma (Proposition \ref{lift})
$$\begin{tikzcd}
 & & A\arrow[d,"\simeq"]\\
A\otimes\Lambda W_{(m)}\arrow[r,"\simeq"']\arrow[rru,"r",dashed] & \frac{A}{(\ker(\varphi))^{m+1}} \arrow[r] & A/J
\end{tikzcd}$$
and we consequently have
$$\begin{tikzcd}
A\arrow[d,"id_A"',equal]  &  A\otimes\Lambda W_{(m)}\arrow[d,"\simeq"]\arrow[l,"r"']  \\
A\arrow[ru,"\iota_m",hook] \arrow[rd,"\simeq"']\arrow[r,"\Gamma_m"']  &  \frac{A}{(\ker(\varphi))^{m+1}} \arrow[d] \\
  &  A/J,
\end{tikzcd}$$
 which commutes, therefore we have $r\circ\iota_m=id_A$, thus
 $$sc(\varphi)\leq Hnil \ker(\varphi).$$
 \begin{prop}
 Let $\varphi:A\rightarrow B$ be a surjective morphism of algebras, in $\text{CDGA}_r(R)$, whose homology satisfies (\ref{finitegenfree}). Then,
 $$ nil \ker(H(\varphi))\leq secat(\varphi)\leq sc(\varphi) \leq Hnil \ker(\varphi) \leq nil \ker(\varphi) .$$
 \end{prop}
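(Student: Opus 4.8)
The plan is to read the displayed chain from left to right and recognise that each of the four inequalities has already been established elsewhere in the paper, so that the proof is essentially pure assembly.

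First, for $nil\ker(H(\varphi)) \le secat(\varphi)$, I would invoke the chain $nil\ker(H(\varphi)) \le Hsecat(\varphi) \le msecat(\varphi) \le secat(\varphi)$ recorded in Section~\ref{secatComm}: the leftmost step is the adaptation of \cite[Proposition~6]{JCar} noted there, and the remaining two steps are the tautological observations that injectivity on homology is implied by the existence of an $A$-module retraction, which in turn is implied by the existence of a cdga retraction. Next, $secat(\varphi) \le sc(\varphi)$ is precisely Theorem~\ref{upbound}(i), whose proof builds $\theta$ via the lifting lemma (Proposition~\ref{lift}) and then composes a homotopy retraction for $\Gamma_m$ with $\theta$ to produce the required $\tau$ with $\tau\circ i_m = \mu_{m+1}$.

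For $sc(\varphi) \le Hnil\ker(\varphi)$, I would spell out the diagram chase sketched immediately before the statement: writing $k = Hnil\ker(\varphi)$, there is an acyclic ideal $J$ of $A$ with $(\ker\varphi)^{k+1} \subseteq J$, so the map $A/(\ker\varphi)^{k+1} \to A/J$ composed with $\Gamma_k$ is the quasi-isomorphism $A \to A/J$; lifting the commutative model $(A\otimes\Lambda W_{(k)},d) \to (A/(\ker\varphi)^{k+1},\overline d)$ along this quasi-isomorphism via Proposition~\ref{lift} yields a cdga morphism $r$ with $r\circ\iota_k = id_A$, i.e. a homotopy retraction for $\Gamma_k$, whence $sc(\varphi)\le k$. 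Finally, for $Hnil\ker(\varphi) \le nil\ker(\varphi)$, put $k = nil\ker(\varphi)$, so that $(\ker\varphi)^{k+1} = 0$; since the zero ideal is (vacuously) acyclic, one may take $J = (0)$ in the definition of $Hnil$, giving $Hnil\ker(\varphi) \le k$ and closing the chain.

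The proof therefore carries no genuine obstacle: the Proposition is a corollary packaging Theorem~\ref{upbound}, the Section~\ref{secatComm} inequalities, and the $Hnil$ discussion of this section into a single display. The only point worth stating explicitly is the last step, where one must agree that the trivial ideal $(0)$ counts as an acyclic ideal so that $nil$ dominates $Hnil$ — a convention already implicit in the definition of $Hnil\,I$ given at the start of Section~\ref{models}'s successor, namely this section.
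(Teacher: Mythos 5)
Your proposal is correct and matches the paper's (implicit) proof exactly: the proposition is stated without a separate proof precisely because it assembles the chain $nil\ker(H(\varphi))\leq Hsecat(\varphi)\leq msecat(\varphi)\leq secat(\varphi)$ from Section \ref{secatComm}, Theorem \ref{upbound}(i), the acyclic-ideal lifting argument given immediately before the statement, and the observation that $(\ker\varphi)^{nil\ker(\varphi)+1}=0$ sits inside the (acyclic) zero ideal. No further comment is needed.
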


 \begin{cor} Let $X$ be a finite type $(r,\rho(R))$-mild CW-complex such that $H_*(\Omega X, R)$ is torsion free. Then,
 $$ nil \ker(H(\mu_{\Lambda V,n}))\leq TC_n(X,R)\leq tc_n(X,R)\leq Hnil \ker(\mu_{\Lambda V,n})\leq nil \ker(\mu_{\Lambda V,n}).$$
 \end{cor}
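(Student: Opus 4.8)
The plan is to obtain the corollary by a direct application of the preceding Proposition to the surjective cdga morphism $\varphi=\mu_{\Lambda V,n}\colon (\Lambda V)^{\otimes n}\to \Lambda V$, and then to rewrite its two middle terms by the definitions $TC_n(X,R)=secat(\mu_{\Lambda V,n})$ and $tc_n(X,R)=sc(\mu_{\Lambda V,n})$ (Definition \ref{TC-R}).

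First I would check that $\mu_{\Lambda V,n}$ satisfies the hypotheses of that Proposition. Surjectivity is immediate, since $\Lambda V$ is unital and $\mu_{\Lambda V,n}(a\otimes 1\otimes\cdots\otimes 1)=a$. By construction $(\Lambda V,d)\to C^*(L)$ is the decomposable commutative model of the $(r,\rho(R'))$-mild CW-complex $X$, so $H^*(\Lambda V)\cong H^*(X;R)$ is concentrated in degrees $r+1,\dots,r\rho(R')$; the K\"unneth formula (as already used for tensor powers of H-mild algebras) then shows that $H^*((\Lambda V)^{\otimes n})$ vanishes above degree $nr\rho(R')$, i.e. $(\Lambda V)^{\otimes n}$ is $(r,nr\rho(R'))$-H-mild. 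Passing, as permitted, to a principal ideal domain $R$ with $n\rho(R')\leq\rho(R)$ and $\frac{1}{2}\in R$ (condition (\ref{rR})), both $(\Lambda V)^{\otimes n}$ and the quotients $(\Lambda V)^{\otimes n}/(\ker\mu_{\Lambda V,n})^{m+1}$ become $(r,\rho(R))$-H-mild, so $\mu_{\Lambda V,n}$ is a morphism in $\text{CDGA}_r(R)$. Finally, since $X$ is $r$-connected of finite type with $H_*(\Omega X;R)$ torsion free, $H^*(\Lambda V)=H^*(X;R)$ satisfies (\ref{finitegenfree}); this is exactly what is needed to run Proposition \ref{commodel} and the lifting Proposition \ref{lift} for the projections defining $secat(\mu_{\Lambda V,n})$ and $sc(\mu_{\Lambda V,n})$.

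Once this is in place, I would invoke the Proposition above with $\varphi=\mu_{\Lambda V,n}$, which gives
$$nil\ker(H(\mu_{\Lambda V,n}))\leq secat(\mu_{\Lambda V,n})\leq sc(\mu_{\Lambda V,n})\leq Hnil\ker(\mu_{\Lambda V,n})\leq nil\ker(\mu_{\Lambda V,n}),$$
and then substitute $secat(\mu_{\Lambda V,n})=TC_n(X,R)$ and $sc(\mu_{\Lambda V,n})=tc_n(X,R)$ to reach the asserted chain. For well-posedness one also notes, via the uniqueness statement (Corollary $1$), that the decomposable commutative model of $C^*(L)$ is unique up to quasi-isomorphism and functorial in $X$, so that all five quantities depend only on $X$ and $R$.

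The step I expect to require the most care is the verification in the second paragraph: one must ensure that the change of ring from $R'$ to $R$ is harmless for every quotient $(\Lambda V)^{\otimes n}/(\ker\mu_{\Lambda V,n})^{m+1}$ that actually occurs (it suffices to treat $m\leq nil\ker(\mu_{\Lambda V,n})$), and that the torsion-freeness of $H_*(\Omega X;R)$ propagates to the freeness of the degree-$(r+1)$ cohomology of every cdga entering the construction, so that Propositions \ref{commodel} and \ref{lift} apply at each stage. Granting this bookkeeping, the corollary is a formal consequence of the preceding Proposition together with the definitions of $TC_n(X,R)$ and $tc_n(X,R)$.
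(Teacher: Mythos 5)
Your proposal is correct and matches the paper's (implicit) argument: the paper states this corollary as an immediate consequence of the preceding proposition applied to $\varphi=\mu_{\Lambda V,n}$, together with the definitions $TC_n(X,R)=secat(\mu_{\Lambda V,n})$ and $tc_n(X,R)=sc(\mu_{\Lambda V,n})$. Your additional verification that $\mu_{\Lambda V,n}$ satisfies the hypotheses of that proposition is sound and in fact more explicit than what the paper records.
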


\section{Asecat and Asc}

In this section, we introduce $Asecat$ and $Asc$ of a surjective dga's, respective analogs of $secat$ and $sc$ in the commutative case.  Let $\varphi :(A,d)\rightarrow (B,d)$ be a surjective morphism of dga's whose homology satisfies (\ref{finitegenfree}), and consider the projection
$$p_{m}:\left({A}^{\otimes m+1}, d\right) \longrightarrow\left(\frac{{A}^{\otimes m+1}}{\left(\operatorname{ker} {{(\varphi)}}\right)^{\otimes m+1}}, \overline{d} \right).$$
By proposition \ref{free case}, $p_m$ admits a free model $i_m$
$$\begin{tikzcd}
\left(A^{\otimes m+1}, d\right) \arrow[r,"p_{m}"] \arrow[rd,hook,"i_m"']
&  \left(\frac{{A}^{\otimes m+1}}{\left(\operatorname{ker} {{(\varphi)}}\right)^{\otimes m+1}}, \overline{d} \right) \\
  &  (A^{\otimes m+1}\sqcup T W_{(m)},d). \arrow[u,"\simeq"']
\end{tikzcd}$$
Same as in section \ref{secatComm}, the pushout
\begin{equation*}
\begin{tikzcd}
\left(A^{\otimes m+1}, d\right) \arrow[r,"i_m",hook] \arrow[d,"\mu_{m+1}"'] & (A^{\otimes m+1}\sqcup T W_{(m)},d) \arrow[d,"\bar{\mu}_{m+1}"] \arrow[rdd,"\tau",bend left=15] & \\
(A,d) \arrow[r,"j_m"',hook] \arrow[rrd,"id_A"',bend right=15] & (A\sqcup T W_{(m)},d) \arrow[rd,"r",dashed] & \\	
 & & (A,d)
\end{tikzcd}
\end{equation*}
allows the following
\begin{definition}
For any surjective morphism $\varphi :(A,d)\rightarrow (B,d)$ of dga's whose homology satisfies (\ref{finitegenfree}), $Asecat(\varphi)$ is the least $m$ such that there exists a morphism of dga's 
$$\tau : (A^{\otimes m+1}\sqcup T W_{(m)},d) \rightarrow (A,d)$$ satisfying $\tau \circ i_m=\mu_{m+1}$. Equivalently, $m$ the least integer such that there is a dga retraction $r$ of $j_m$.
\end{definition}
Now consider a $1$-connected $R$-finite type space $X$, that is, $H^0(X;R)=R$, $H^1(X;R)=0$ and $H^i(X;R)$ is finite type $R$-module for all $i$. It follows from the universal coefficients theorem that $H^2(X;R)$ is a free $R$-module. Therefore $C^*(X;R)$ satisfies (\ref{finitegenfree}) (for $r=1$), consequently $C^*(X;R)$ admits a free model
$$(TV,d)\stackrel{\simeq}{\longrightarrow} C^*(X;R),$$
\textit{$(TV,d)$ is called a free minimal $R$-model of $X$.} It results the following
\begin{cor}
Every $1$-connected $R$-finite type space admits a free minimal $R$-model, and the model is decomposable if and only if $H_*(\Omega X;R)$ is $R$-free.
\end{cor}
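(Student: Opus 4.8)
The plan is to deal with the two clauses separately, the second being where the work lies. For the first clause there is nothing new beyond the discussion preceding the statement: $C^*(X;R)$ has $H^0=R$, $H^1=0$, is of finite type, and $H^2(X;R)$ is $R$-free by the universal coefficient theorem, so it satisfies (\ref{finitegenfree}) with $r=1$; Proposition \ref{free case} applied to $f:(R,0)\to C^*(X;R)$ (hypothesis (v) is void since $H^2(R,0)=0$) gives a free model $(TV,d)\xrightarrow{\simeq}C^*(X;R)$ with $V$ $R$-free of finite type, minimal in the sense recalled after Proposition \ref{free case}. I would also observe that any two such models are isomorphic, by Propositions \ref{free case} and \ref{rel-free case} argued exactly as in the commutative case, so that ``decomposable'' is a well-defined property of $X$.

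For the equivalence I would first reduce it to the linear part of $d$. Write $d=d_0+d_{\ge 2}$ with $d_0:V\to V$ the linear part and $d_{\ge 2}(V)\subseteq T^{\ge 2}V$; by definition $(TV,d)$ is decomposable exactly when $d_0=0$, i.e.\ when summands of type (\ref{indecomposability}) never occur. Minimality gives $d_0(V^i)\subseteq r_iV^{i+1}$ with $r_i$ non-invertible, so the structure theorem for bounded-below complexes of finitely generated free modules over the PID $R$ splits $(V,d_0)$ into elementary complexes $(R,0)$ and $\big(R\xrightarrow{c}R\big)$ with $c$ a non-unit. Hence $H(V,d_0)$ is $R$-free if and only if no block of the second kind occurs, i.e.\ if and only if $d_0=0$. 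So it remains to prove that $H(V,d_0)$ is $R$-free if and only if $H_*(\Omega X;R)$ is $R$-free.

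This is where the bar construction enters. Since $(TV,d)\xrightarrow{\simeq}C^*(X;R)$ one has $B(TV,d)\simeq B(C^*(X;R))$, and the Eilenberg--Moore theorem for the path--loop fibration over the simply connected space $X$, valid over the PID $R$, identifies $H^*\!\big(B(C^*(X;R))\big)$ with $H^*(\Omega X;R)$. On the other hand, because $TV$ is \emph{free}, $B(TV,d)$ admits a chain deformation retraction onto the complex $(R\oplus V,\,d_0)$ (up to the usual degree shift): this is the differential graded refinement of the vanishing $\operatorname{Tor}^{TV}_{\ge 2}(R,R)=0$, realized by the inclusion $v\mapsto[v]$ corrected in higher bar length, and the transferred differential is exactly $d_0$ because the weight counting $T(V)$-word-length is preserved by the bar differential and by the contracting homotopy while $d_{\ge 2}$ strictly raises it, so no correction term involving $d_{\ge 2}$ can return to word-length one. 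Consequently $H^*(\Omega X;R)\cong R\oplus H(V,d_0)$ as $R$-modules, and by the universal coefficient theorem, $H_*(\Omega X;R)$ being of finite type, $H(V,d_0)$ is $R$-free iff $H^*(\Omega X;R)$ is iff $H_*(\Omega X;R)$ is. Combined with the previous step this gives the corollary.

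The hard part will be the last retraction over a PID: over a field it is classical that the bar construction of a free dga is minimal through bar length one, but over $R$ one must exhibit an explicit $R$-linear homotopy, which I would obtain from the homological perturbation lemma applied to the length-one free resolution $0\to TV\otimes V\to TV\to R\to 0$ of the trivial $TV$-module (this is where finiteness of each $V^i$ matters). For the single implication ``$H_*(\Omega X;R)$ $R$-free $\Rightarrow$ decomposable'' one could alternatively take the free model of Halperin's commutative decomposable model (\ref{mmodel}) of $C^*(L)\simeq C^*(X;R)$, check that it acquires no linear part, and conclude by uniqueness of minimal free models; but the bar-construction argument handles both directions at once.
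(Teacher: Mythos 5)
Your treatment of the existence clause is exactly the paper's: check that $C^*(X;R)$ satisfies (\ref{finitegenfree}) for $r=1$ (freeness of $H^2(X;R)$ via universal coefficients) and apply Proposition~\ref{free case} with $(A,d)=(R,0)$. For the ``decomposable iff $H_*(\Omega X;R)$ is $R$-free'' clause, however, the paper offers no argument at all --- the corollary is introduced with ``It results the following'' and is left as an implicit consequence of the preceding construction and of the cited results of Anick and Halperin --- so your second and third paragraphs are a genuine proof where the paper has none, rather than a variant of the paper's route. The route you choose is the standard one and is sound: decomposability is equivalent to vanishing of the linear part $d_0$; minimality ($d_0(V^i)\subseteq r_iV^{i+1}$ with $r_i$ non-invertible) together with the splitting of a bounded-below complex of finitely generated free modules over a PID into elementary blocks gives $d_0=0$ iff $H(V,d_0)$ is $R$-free; and the identification of $R\oplus sH(V,d_0)$ with $H^*(\Omega X;R)$ via the bar construction and Eilenberg--Moore (this is the minimality formalism of \cite{BL}, which you could cite instead of re-deriving the word-length/perturbation argument) closes the loop. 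Three hypotheses should be made explicit in a write-up, none of which is a gap: the invariance of $B(-)$ under the quasi-isomorphism $(TV,d)\to C^*(X;R)$ needs both underlying $R$-modules to be flat (they are: $TV$ is free and $C^*(X;R)$ is torsion-free over the PID $R$); the Eilenberg--Moore identification over $R$ needs $X$ simply connected; and the final universal-coefficients step needs $H_*(\Omega X;R)$ to be degreewise finitely generated, which follows from $X$ being $1$-connected of finite type. Your closing remark that uniqueness of minimal free models (needed for ``the model'' to be well posed) must be argued as in the commutative case deserves the same caveat as the paper's own Corollary: Proposition~\ref{rel-free case} only lifts against \emph{surjective} quasi-isomorphisms, so the comparison of two minimal models requires the surjective-trick or the analogue of Halperin's Proposition 7.7, but your bar-construction argument in fact sidesteps this, since it shows that decomposability of any one minimal model is equivalent to the model-independent condition that $\overline{\operatorname{Tor}}^{C^*(X;R)}(R,R)$ be $R$-free.
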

\begin{definition}
Let $X$ be a $1$-connected $R$-finite type space that admits a free decomposable model $(TV,d)$, then
$$ATC_n(X,R)=Asecat(\mu_{TV,n}),$$
where $\mu_{TV,n}:{TV}^{\otimes n}\rightarrow TV$ is the $n$-fold multiplication on $TV$.
\end{definition}
Similarly, we consider a surjective morphism $\varphi :(A,d)\rightarrow (B,d)$ of dga's whose homology satisfies (\ref{finitegenfree}), applying proposition \ref{free case} to the projection
$$\Gamma_{m} :(A,d)\rightarrow \left( \frac{A}{(\ker (\varphi))^{m+1}},\overline{d}\right) $$
we acquire a free model of $\Gamma_{m}$
$$\begin{tikzcd}
\left(A, d\right) \arrow[r,"\Gamma_{m}"] \arrow[rd,hook,"\iota_m"']
&  \left(\frac{A}{\left(\operatorname{ker} {{(\varphi)}}\right)^{m+1}}, \overline{d} \right) \\
  &  (A\sqcup T V_{(m)},d), \arrow[u,"\simeq"']
\end{tikzcd}$$
and we consequently have the following
\begin{definition}
With the notation and the diagram above, $Asc(\varphi)$ is the least integer $m$ such that $\Gamma_{m}$ admits a homotopy retraction.
\end{definition}
As an application of the latter construction, we have the following
\begin{definition}
Let $X$ be a $1$-connected $R$-finite type space admitting a free decomposable model $(TV,d)$, then
$$Atc_n(X,R)=Asc(\mu_{TV,n}).$$
\end{definition}

The same argument as in the commutative case allows the following
\begin{prop}
Let $\varphi :(A,d)\rightarrow (B,d)$ be a surjective morphism satisfying \ref{finitegenfree}, then
$$Asecat (\varphi)\leq Asc (\varphi). $$
\end{prop}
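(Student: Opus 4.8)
The plan is to mirror, in the non-commutative setting of $\text{DGA}_r(R)$, the argument already carried out for cdga's in the proof of Theorem \ref{upbound}. The crucial structural input is the free-model version of the lifting lemma, namely Proposition \ref{rel-free case}, which plays here exactly the role that Proposition \ref{lift} played in the commutative case. So first I would set up a free model $(A\sqcup TW_{(m)},d)\stackrel{\simeq}{\longrightarrow}\left(\frac{A^{\otimes m+1}}{(\ker(\varphi))^{\otimes m+1}},\overline{d}\right)$ of $p_m$ (as in the definition of $Asecat$), and a free model $(A\sqcup TV_{(m)},d)\stackrel{\simeq}{\longrightarrow}\left(\frac{A}{(\ker(\varphi))^{m+1}},\overline{d}\right)$ of $\Gamma_m$ (as in the definition of $Asc$). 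These exist by Proposition \ref{free case} once one checks that all the hypotheses (i)--(v) are inherited; since $\varphi$ is surjective and its homology satisfies (\ref{finitegenfree}), the quotients and tensor powers retain $R$-freeness in degree $r+1$, finite type, and the injectivity in degree $r+1$ needed for (v), exactly as in the opening discussion of Section \ref{secatComm}.

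Next I would assemble the non-commutative analogue of the rectangular diagram in the proof of Theorem \ref{upbound}: the $n$-fold (here $(m+1)$-fold) multiplication $\mu_{m+1}:A^{\otimes m+1}\rightarrow A$ induces a map $\overline{\mu}_{m+1}:\frac{A^{\otimes m+1}}{(\ker(\varphi))^{\otimes m+1}}\rightarrow\frac{A}{(\ker(\varphi))^{m+1}}$, and composing with the free model of $\Gamma_m$ and applying Proposition \ref{rel-free case} (whose hypotheses — minimality of the source free model and surjectivity of the quasi-isomorphism $\eta$ — are met) produces a lift $\theta:(A\sqcup TW_{(m)},d)\rightarrow (A\sqcup TV_{(m)},d)$ over $\overline{\mu}_{m+1}$. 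Then, given a homotopy retraction $r:(A\sqcup TV_{(m)},d)\rightarrow(A,d)$ witnessing $Asc(\varphi)\leq m$, set $\tau=r\circ\theta$; one verifies $\tau\circ i_m=\mu_{m+1}$ from the commutativity of the assembled square, which is precisely the condition defining $Asecat(\varphi)\leq m$ via the pushout characterization. Taking the minimal such $m$ gives $Asecat(\varphi)\leq Asc(\varphi)$.

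I expect the main obstacle to be a bookkeeping point rather than a conceptual one: in the commutative case the retraction for $\Gamma_m$ is literally a cdga morphism, whereas in $\text{DGA}_r(R)$ a \emph{homotopy} retraction is what is available (as in the definition of $Asc$), so one must make sure that composing a homotopy retraction with the (strict) dga morphism $\theta$ still yields something witnessing the $Asecat$ inequality — i.e. that $Asecat$ is itself insensitive to replacing strict retractions by homotopy retractions in the relevant pushout/lifting square. This follows from the homotopy lifting/extension properties of cofibrations (free extensions) in $\text{DGA}(R)$, exactly the appendix-type results of \cite{HL} invoked after Proposition \ref{free case}, but it is the step that requires care. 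A secondary, purely technical point is re-checking that changing the ground ring from $R'$ to $R$ (so that the quotient algebras become $(r,\rho(R))$-H-mild, cf. (\ref{rR})) is compatible with the free-model constructions; this is identical to the commutative discussion and needs only to be cited.
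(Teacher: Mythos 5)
Your proposal is correct and follows essentially the same route as the paper, which proves this proposition simply by asserting that ``the same argument as in the commutative case'' applies -- i.e.\ the proof of Theorem \ref{upbound} with Proposition \ref{rel-free case} replacing Proposition \ref{lift} and free products $A\sqcup TW_{(m)}$ replacing tensor products $A\otimes\Lambda W_{(m)}$. Your worry about homotopy versus strict retractions dissolves under the paper's conventions, since a ``homotopy retraction'' is defined in the Preliminaries as a genuine morphism $r$ with $r\circ i=\mathrm{Id}_A$ on the model, so the composition $\tau=r\circ\theta$ is a strict dga morphism as required.
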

As a result we have
\begin{cor}
For every $1$-connected $R$-finite type space $X$ admitting a free decomposable model $(TV,d)$ we have
$$ATC(X,R)\leq Atc(X,R).$$
\end{cor}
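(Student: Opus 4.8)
The plan is to deduce the corollary directly from the preceding proposition, $Asecat(\varphi)\leq Asc(\varphi)$, by specializing to the surjective dga morphism $\varphi=\mu_{TV,n}\colon (TV)^{\otimes n}\to TV$, the $n$-fold multiplication on a free decomposable model $(TV,d)$ of $X$. First I would verify that $\mu_{TV,n}$ falls under the hypotheses of the proposition: it is visibly surjective (the identity on $TV$ factors through it), and its homology $H((TV)^{\otimes n})\cong H(C^*(X;R))^{\otimes n}$ (up to Tor terms, which vanish in the relevant low degrees since $H^{\leq r}$ vanishes and $H^{r+1}$ is free by the universal coefficients theorem) satisfies condition (\ref{finitegenfree}) because $X$ is $1$-connected of $R$-finite type; here one uses the Künneth-type computation exactly as in Section \ref{secatComm} to see that the tensor power stays in the required range and that $H^{r+1}$ of the tensor power remains $R$-free. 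This is the one place where a small argument is needed rather than a pure citation.

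Once the hypotheses are checked, the definitions give $ATC_n(X,R)=Asecat(\mu_{TV,n})$ and $Atc_n(X,R)=Asc(\mu_{TV,n})$, so the proposition applied to $\varphi=\mu_{TV,n}$ yields
$$ATC_n(X,R)=Asecat(\mu_{TV,n})\leq Asc(\mu_{TV,n})=Atc_n(X,R),$$
which is exactly the claimed inequality (taking $n=2$ recovers the unindexed statement $ATC(X,R)\leq Atc(X,R)$). I would also remark that the existence of the free decomposable model $(TV,d)$, which is part of the hypothesis, is precisely what makes $Asecat$ and $Asc$ of $\mu_{TV,n}$ well-defined via Proposition \ref{free case} and Proposition \ref{rel-free case}, so no circularity arises.

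The substance of the proof is really contained in the preceding proposition, whose argument mirrors that of Theorem \ref{upbound}: given a homotopy retraction $r\colon (A\sqcup TV_{(m)},d)\to (A,d)$ for $\Gamma_m$, one uses the lifting lemma (Proposition \ref{rel-free case}) to produce a morphism $\theta$ from the free model $(A^{\otimes m+1}\sqcup TW_{(m)},d)$ of $p_m$ into $A\sqcup TV_{(m)}$ making the relevant square commute, and then $\tau=r\circ\theta$ satisfies $\tau\circ i_m=\mu_{m+1}$; by the pushout characterization this is equivalent to $j_m$ admitting a dga retraction, hence $Asecat(\varphi)\leq m$. So the corollary itself is essentially a one-line specialization.

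The main obstacle, modest as it is, is the verification that $H((TV)^{\otimes n})$ satisfies (\ref{finitegenfree}) — specifically that passing to the $n$-th tensor power does not introduce torsion in degree $r+1$ and that the connectivity and finite-type conditions are preserved. This is handled by the same Künneth argument used at the start of Section \ref{secatComm} for $(A\otimes A,d)$: the Tor summands contributing to $H^{r+1}$ involve $H^{p'}\otimes H^{q'}$-type Tor terms with $p'+q'=r+2$ and at least one factor in degree $\leq r$, which vanishes, while $H^{r+1}((TV)^{\otimes n})$ reduces to a direct sum of copies of $H^{r+1}(TV)$, which is $R$-free; finite type and the vanishing of $H^{1\leq i\leq r}$ are immediate. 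With that in hand the corollary follows.
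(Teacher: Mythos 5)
Your proposal is correct and matches the paper's (implicit) argument: the corollary is stated as an immediate consequence of the proposition $Asecat(\varphi)\leq Asc(\varphi)$ applied to $\varphi=\mu_{TV,n}$, using the definitions $ATC_n(X,R)=Asecat(\mu_{TV,n})$ and $Atc_n(X,R)=Asc(\mu_{TV,n})$. Your additional K\"unneth-type verification that $H((TV)^{\otimes n})$ satisfies the standing hypothesis is a reasonable extra check that the paper leaves tacit, but it does not change the route.
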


Next, we combine the two invariants from the context of dga's and the context of commutative dga's, respectively through free models and commutative models of a specific $(r,\rho(R))$-mild CW-complex, and extract some inequalities between them. For this purpose, we let $R$ designate a principal domain containing $\frac{1}{2}$.
\begin{prop}
For every finite type $(r,\rho(R))$-mild CW-complex $X$ such that $H_*(\Omega X;R)$ is torsion free, we have 
$$\begin{tabular}{ccc}
$ATC_n(X,R)$ & $\leq$ & $TC_n(X,R)$, \\ 
$Atc_n(X,R)$ & $\leq$ & $tc_n(X,R)$. \\ 
\end{tabular} $$
\end{prop}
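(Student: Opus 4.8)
\emph{Strategy.}
Since $H_*(\Omega X;R)$ is torsion free and of finite type over a PID, it is $R$-free, so $X$ possesses both a commutative decomposable model $(\Lambda V,d)$ and a free decomposable model $(TV,d)$, and by definition $ATC_n(X,R)=Asecat(\mu_{TV,n})$, $TC_n(X,R)=secat(\mu_{\Lambda V,n})$, $Atc_n(X,R)=Asc(\mu_{TV,n})$, $tc_n(X,R)=sc(\mu_{\Lambda V,n})$. The plan has two steps. First, I would show that $Asecat$ and $Asc$ depend only on the quasi-isomorphism type of a surjective morphism, and use a dga quasi-isomorphism $(TV,d)\xrightarrow{\simeq}(\Lambda V,d)$ to replace $\mu_{TV,n}$ by $\mu_{\Lambda V,n}$ (viewed merely as a surjective \emph{dga} morphism). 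Second, I would prove that for an arbitrary surjective cdga morphism $\varphi$ one has $Asecat(\varphi)\le secat(\varphi)$ and $Asc(\varphi)\le sc(\varphi)$, by comparing a free model of the relevant projection with its commutative model through the relative lifting lemma. Chaining these with $\varphi=\mu_{\Lambda V,n}$ yields the two asserted inequalities.

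\emph{Step 1: passing from the free model to the commutative model.}
Applying Proposition~\ref{free case} with $(A,d_A)=(R,0)$ to the underlying dga of $(\Lambda V,d)$ --- the hypotheses hold since $X$ is $1$-connected of finite type and $H^2(X;R)$ is $R$-free by universal coefficients --- gives a free minimal dga $(\widetilde{TV},d)$ and a dga quasi-isomorphism $(\widetilde{TV},d)\xrightarrow{\simeq}(\Lambda V,d)$; by uniqueness of free minimal models (the free analog of the corollary following Proposition~\ref{lift}) we may take $\widetilde{TV}=TV$, so there is a dga quasi-isomorphism $q:(TV,d)\xrightarrow{\simeq}(\Lambda V,d)$. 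Being an algebra map, $q$ satisfies $q\circ\mu_{TV,n}=\mu_{\Lambda V,n}\circ q^{\otimes n}$ on the nose, and $q^{\otimes n}:(TV)^{\otimes n}\xrightarrow{\simeq}(\Lambda V)^{\otimes n}$ is again a quasi-isomorphism by naturality of the (iterated) K\"unneth formula over a PID. Invoking the invariance of $Asecat$ and $Asc$ under quasi-isomorphisms of the morphism --- established exactly as the corresponding invariance of $secat$ and $sc$ in the commutative context (\cite{JCar}; cf. Section~\ref{secatComm}) --- gives $Asecat(\mu_{TV,n})=Asecat(\mu_{\Lambda V,n})$ and $Asc(\mu_{TV,n})=Asc(\mu_{\Lambda V,n})$.

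\emph{Step 2: $Asecat\le secat$ and $Asc\le sc$ for a surjective cdga morphism.}
Let $\varphi:A\to B$ be a surjective morphism in $\text{CDGA}_r(R)$ satisfying (\ref{finitegenfree}) and set $m=secat(\varphi)$. By (\ref{secatchara}) there is a cdga morphism $\tau^{c}:(A^{\otimes m+1}\otimes\Lambda W_{(m)},d)\to(A,d)$ with $\tau^{c}\circ i_m^{c}=\mu_{m+1}$, where $i_m^{c}$ and the surjective quasi-isomorphism $\pi:(A^{\otimes m+1}\otimes\Lambda W_{(m)},d)\xrightarrow{\simeq}(A^{\otimes m+1}/(\ker\varphi)^{\otimes m+1},\overline d)$ form the commutative model of $p_m$ (Proposition~\ref{commodel}). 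By Proposition~\ref{free case}, $p_m$ also admits a free model $i_m^{f}:A^{\otimes m+1}\hookrightarrow(A^{\otimes m+1}\sqcup TW'_{(m)},d)$ with quasi-isomorphism $\pi'$ onto the same quotient. As $A^{\otimes m+1}\sqcup TW'_{(m)}$ is a minimal free dga relative to $A^{\otimes m+1}$ and $\pi$ is a surjective dga quasi-isomorphism, the relative form of the lifting lemma (Proposition~\ref{rel-free case}) produces a dga morphism $\theta:(A^{\otimes m+1}\sqcup TW'_{(m)},d)\to(A^{\otimes m+1}\otimes\Lambda W_{(m)},d)$ with $\pi\circ\theta=\pi'$ and $\theta\circ i_m^{f}=i_m^{c}$. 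Then $\tau:=\tau^{c}\circ\theta$ is a dga morphism with $\tau\circ i_m^{f}=\tau^{c}\circ i_m^{c}=\mu_{m+1}$, whence $Asecat(\varphi)\le m=secat(\varphi)$. Running the same argument with $p_m$ replaced by $\Gamma_m:A\to A/(\ker\varphi)^{m+1}$ and the tensor powers replaced by $A$ itself --- so that a homotopy retraction $k_m$ of $\Gamma_m$ against its commutative model gives the dga retraction $k_m\circ\theta$ of $\Gamma_m$ against its free model --- yields $Asc(\varphi)\le sc(\varphi)$.

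\emph{Conclusion and the main difficulty.}
Taking $\varphi=\mu_{\Lambda V,n}$, which is a surjective cdga morphism satisfying (\ref{finitegenfree}) after the standard adjustment of $\rho(R)$ described at the beginning of Section~\ref{secatComm}, Steps~1 and~2 combine to give $ATC_n(X,R)=Asecat(\mu_{TV,n})=Asecat(\mu_{\Lambda V,n})\le secat(\mu_{\Lambda V,n})=TC_n(X,R)$ and, verbatim, $Atc_n(X,R)\le tc_n(X,R)$. Step~2 is purely formal: it rests only on the fact that a cdga morphism is in particular a dga morphism and that the free model of $p_m$ (resp. $\Gamma_m$) maps into its commutative model by the relative lifting lemma. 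The delicate point is Step~1 --- one must verify that $Asecat$ and $Asc$ are genuine quasi-isomorphism invariants of a surjective morphism, and that $q$ and its tensor powers remain quasi-isomorphisms after restricting to $\ker\mu_{\bullet,n}$ and its powers; over a PID the latter follows from naturality of the K\"unneth formula once $q$ is taken (without loss of generality) to be a surjective quasi-isomorphism.
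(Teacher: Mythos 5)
Your Step 2 is sound and is essentially the mechanism the paper itself uses: lift the free model of the relevant projection into its commutative model through the relative lifting lemma and compose with the given retraction. The genuine gap is in Step 1. You reduce $Asecat(\mu_{TV,n})$ and $Asc(\mu_{TV,n})$ to $Asecat(\mu_{\Lambda V,n})$ and $Asc(\mu_{\Lambda V,n})$ by invoking ``invariance of $Asecat$ and $Asc$ under quasi-isomorphisms of the morphism, established exactly as the corresponding invariance of $secat$ and $sc$ in the commutative context.'' No such invariance is established anywhere in the paper, and for $sc$ (hence for $Asc$) it is not expected to hold: $sc$ is introduced, following Carrasquel, precisely as a \emph{non-homotopy} invariant, because it is built from the powers $(\ker\varphi)^{m+1}$ of the literal kernel of the chosen algebra representative. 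What makes $tc_n(X,R)=sc(\mu_{\Lambda V,n})$ and $Atc_n(X,R)=Asc(\mu_{TV,n})$ well defined is uniqueness of the minimal commutative (resp.\ free) model up to isomorphism; but $(TV,d)$ and $(\Lambda V,d)$ are only quasi-isomorphic, not isomorphic, as dgas, so isomorphism invariance does not bridge them and quasi-isomorphism invariance is exactly what is missing. You flag this yourself as ``the delicate point'' but do not close it.

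The repair is to collapse your two steps into one, which is what the paper does by deferring to \cite[Theorem 3.3 (i)]{HL}: produce a dga quasi-isomorphism $q:(TV,d)\to(\Lambda W,d)$ from the zigzag through $B(C_*(\Omega X;R))^{\vee}$; since $q\circ\mu_{TV,n}=\mu_{\Lambda W,n}\circ q^{\otimes n}$, the map $q^{\otimes n}$ carries $\ker\mu_{TV,n}$ into $\ker\mu_{\Lambda W,n}$, hence the tensor (resp.\ ordinary) powers of the one kernel into those of the other, and so induces a map between the corresponding quotients. The free model of the projection on the $TV$ side then lifts, by Proposition \ref{rel-free case}, into the commutative model of the projection on the $\Lambda W$ side, and composing with the retraction furnished by $TC_n(X,R)$ (resp.\ $tc_n(X,R)$) produces the one required for $ATC_n(X,R)$ (resp.\ $Atc_n(X,R)$). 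This is your Step 2 run across $q$ rather than across the identity, and it never asks either invariant to be a quasi-isomorphism invariant.
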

\begin{proof}
On one hand, $X$ admits a free decomposable model
$$(TV,d)\stackrel{\simeq}{\longrightarrow} C^*(X;R).$$
On the other hand, $X$ admits a commutative model
$$(\Lambda W,d)\stackrel{\simeq}{\longrightarrow} C^*(L),$$
combining the two models yields the following
$$\begin{tikzcd}
C^*(X;R) & B(C_*(\Omega (X),R)^{\vee}\arrow[l,"\simeq"']\arrow[r,"\simeq"] & C^*(L)  \\
TV \arrow[u,"\simeq"] \arrow[rr] && \Lambda W. \arrow[u,"\simeq"']
\end{tikzcd}$$
The rest of the proof is analogous th that of \cite[Theorem 3.3 (i)]{HL}.
\end{proof}

\end{document}